\newtheorem{Thm}{Theorem}[section]
\renewenvironment{proof}[1][\proofname]{\par
  \normalfont
  \topsep6\p@\@plus6\p@ \trivlist
  \item[\hskip\labelsep{\bfseries #1}\@addpunct{\bfseries.}]\ignorespaces
}{%
  \endtrivlist
}
\renewcommand{\proofname}{Proof}
\def\qed{\hfill $\Box$}
\newcommand{\argmin}{\mathop{\rm arg~min}\limits}
\def\vector#1{\mbox{\boldmath $#1$}}
\begin{document}

\title{GMRES using pseudoinverse for range symmetric singular systems}

\author{Kota Sugihara\footnote{kouta.sugihara@gmail.com}, \; Ken~Hayami\footnote{Professor Emeritus, National Institute of Informatics, and The Graduate University for Advanced Studies (SOKENDAI),  Email: hayami@nii.ac.jp}, \;  and \; Liao Zeyu\footnote{Department of Informatics, The Graduate University for Advanced Studies (SOKENDAI), 2-1-2, Hitotsubashi, Chiyoda-ku, Tokyo, 101-8430, Japan} }

\date{}

\maketitle




\begin{abstract}
Consider solving large sparse range symmetric 
singular linear systems $A\vector{x} = \vector{b}$ which arise, for instance, in the discretization 
of convection diffusion equations with periodic boundary conditions, and partial differential equations for electromagnetic fields using the edge-based finite element method.

In theory, the Generalized Minimal Residual (GMRES) method converges to the least squares solution for
inconsistent systems if the coefficient matrix $A$ is range symmetric, 
i.e. $ {\rm R}(A)= {\rm R}(A^{ \rm T } )$, where 
$ {\rm R}(A)$ is the range space of $A$.

We derived the necessary and sufficient conditions for GMRES to determine a least squares solution of inconsistent
and consistent range symmetric systems assuming exact arithmetic except for the computation of the elements
of the Hessenberg matrix.

In practice, GMRES may not converge due to numerical instability. In order to improve the convergence, 
we propose using the pseudoinverse
for the solution of the severely ill-conditioned Hessenberg systems in GMRES.
Numerical experiments on 
inconsistent systems indicate that the method is effective and robust.
Finally, we further improve the convergence of the method by reorthogonalizing the Modified Gram-Schmidt procedure. 
\end{abstract}

\vspace{5mm}
{\bf Keywords}:
GMRES method, Pseudoinverse, Range Restricted GMRES method, Range symmetric singular linear systems, Reorthogonalization

\section{INTRODUCTION}\label{sec:introsec}
Consider the system of linear equations
\begin{equation}\label{eqn:probset}
 A\vector{x} = \vector{b}
\end{equation}
or the linear least squares problem
\begin{equation}\label{eqn:probLS}
  \min_{\vector{x}\in \mathbb{R}^{n}}\|\vector{b} - A\vector{x}\|_{2}
\end{equation}
where $A\in \mathbb{R}^{n\times n}$ is range symmetric i.e. ${\rm R}(A)={\rm R}(A^{\rm T})$ and singular, \\
$\vector{x}, ~\vector{b}\in \mathbb{R}^{n}$,
which arise, for instance, in the discretization of convection diffusion equations with periodic boundary conditions \cite{Brown}, 
and partial differential equations of
electromagnetic fields using the edge-based finite element method \cite{edgMag,StaMag}.
(\ref{eqn:probset}) is called consistent 
when $\vector{b}\in R(A)$, and inconsistent otherwise.

The obvious Krylov subspace methods
for solving (\ref{eqn:probset}) would be
the Generalized Minimal Residual (GMRES) method \cite{Saad2nd,Saad} considering range symmetry 
${\rm R}(A)={\rm R}(A^{\rm T})$ of the
coefficient matrix $A$, which guarantees the convergence of
GMRES to 
a least squares solution of (\ref{eqn:probLS}) when $\vector{b} \notin {\rm R}(A)$
without breakdown \cite{Brown,hayamiM}.
However, for inconsistent systems, GMRES sometimes does not converge well numerically even if ${\rm R}(A)={\rm R}(A^{\rm T})$
since the condition number of the Hessenberg matrix becomes extremely large \cite{Brown,MoriEP}.

Assume that $\vector{b}$ is exact. That is, there are no discretization or measurement errors in $\vector{b}$.
In this paper, we prove that GMRES determines a least squares solution assuming exact arithmetic except
for the computation of the elements of the Hessenberg matrix, under certain conditions.
We also propose using pseudoinverse to solve the Hessenberg systems in GMRES in order to improve the numerical convergence  
for inconsistent systems.
Some numerical experiments on symmetric semidefinite inconsistent systems 
and nonsymmetric but range symmetirc singular systems
indicate that the method is effective and robust.

For some ill-conditioned and inconsistent systems, the convergence of GMRES using pseudoinverse is not enough.
For such cases, we show that the convergence may be improved by reorthogonalizing the modified Gram-Schmidt procedure.

We note that when $\vector{b}$ is contaminated by discretization error or measurement error, we may for instance use the
discrepancy principle and terminate the GMRES iterations when the residual is compatible to the error in the right-hand side.
In such a case, the Hessenberg matrix is not yet so ill-conditioned, so it is not necessary to use pseudoinverse to solve
the Hessenberg system.

\section{Motivation of this research}\label{sec:Motivation}
In this paper, we are addressing the problem of making GMRES converge for severely ill-conditioned 
or singular inconsistent systems. There are problems which are inconsistent even if we assume that 
there are no discretization errors or measerument errors. For example, 
in the 
partial differential equation \\
$\displaystyle {\rm curl}~\nu ({\rm curl} \vector{A}) = \vector{J}_{0}$
 for static magnatic fields \cite{StaMag},
the right hand side $\vector{J}_{0}$  may not satisfy 
$ \nabla \cdot \vector{J}_{0} = 0 $.
Here, ${\rm curl} \vector{A}$ is defined as $\nabla \times \vector{A}$, $\vector{A}$ is the vector potential,
$\nu$ is the magnetic reluctivity, and $\vector{J}_{0}$ is the external current density.
Then, if 
$\vector{J}_{0}$  does not satisfy $\displaystyle \nabla \cdot \vector{J}_{0} = 0$,
the linear system which arises by discretizing this partial equation becomes inconsistent even if 
there are no discretization errors.
After discretization, one could make the system (\ref{eqn:probset}) consistent by projecting $\vector{b}$
to ${\rm R}(A)$ in order that the (preconditioned) conjugate gradient (CG) converges to a solution.
However, in general, this may be infeasible if ${\rm R}(A)$ is not given explicitly.
Therefore, we consider solving the inconsistent system directly without transforming this system into a consistent system.
Thus, we use GMRES which is guaranteed to converge for inconsistent systems if ${\rm R}(A)={\rm R}(A^{\rm T})$.
Furthermore, we
propose using pseudoinverse to solve the Hessenberg systems 
and reorthogonalization of the Arnoldi process
in GMRES in order to 
improve the numerical convergence for inconsistent systems.

\section{GMRES}\label{sec:GMRESsec}
Let $\vector{x}_{0}$ be the initial approximate solution and 
$\vector{r}_{0} = \vector{b} - A\vector{x}_{0}$
be the initial residual vector. Denote 
the Krylov subspace by \\
$K_{k}(A, \vector{r}_{0}) = {\rm span}(\vector{r}_{0}, A\vector{r}_{0}, ...., A^{k-1} \vector{r}_{0})$. 
GMRES is an iterative method which finds an approximate solution $\vector{x}_{k}$ which 
satisfies
\begin{equation}\label{eq:algmres}
\vector{x}_{k} = \argmin_{\vector{x} \in \vector{x}_{0} + K_{k}(A, \vector{r}_{0})}\|\vector{b} - A\vector{x}\|_{2}
\end{equation}

Denote by $V_{k}$, the $n\times k$ matrix with column vectors $\vector{v}_{1},...,\vector{v}_{k}$ which
forms an orthonormal basis of 
$K_{k}(A, \vector{r}_{0})$.
An approximate solution $\vector{x}_{k} \in \vector{x}_{0} + K_{k}(A, \vector{r}_{0})$ can be obtained 
as 
$\vector{x}_{k}=\vector{x}_{0}+V_{k}\vector{y}_{k}$
where
\begin{equation}\label{eq:algmresH}
\vector{y}_{k} = \argmin_{\vector{y} \in  \mathbb{R}^{k}}\|\beta\vector{e}_{1} - H_{k+1,k}\vector{y}\|_{2}.
\end{equation}
Here, $H_{k+1,k} = [h_{i,j}] \in \mathbb{R}^{(k+1)\times k}$, where $AV_{k} = V_{k+1}H_{k+1,k}$ holds,\\ 
$\beta = \|\vector{r}_{0}\|_{2} = \|\vector{b}-A\vector{x}_{0}\|_{2}$ and 
$\vector{e}_{1}=[1,0,...,0]^{\rm{T}}$.

\section{Convergence analysis of GMRES considering rounding error for computing $h_{i,j}$}\label{sec:convStab}
Let $h_{i,j}$ be the $(i,j)$ element of $H_{k+1,k}$.

\begin{Thm}\label{Thm:StabGmres}
Let $u$ be the unit roundoff.
Let $\|H_{k+1,k}\|_{F}$ denote the Frobenius norm of $H_{k+1,k}$.
Assume exact arithmetic except for the computation of $h_{i,j}$.
Let ${\rm R}(A) = {\rm R}(A^{{\rm T}})$. 

Then, the following hold.
\begin{enumerate}
\item If (\ref{eqn:probset}) is inconsistent,
GMRES determines a solution of $\min_{\vector{x}\in \mathbb{R}^{n}}\|\vector{b} - A\vector{x}\|_{2}$ at the k-th step if and only if 
$h_{k+1,k}/\|H_{k,k}\|_{F} = O(\sqrt{u})$.
\item  If (\ref{eqn:probset}) is consistent,
GMRES determines a solution of $\min_{\vector{x}\in \mathbb{R}^{n}}\|\vector{b} - A\vector{x}\|_{2}$ at the k-th step if and only if 
$h_{k+1,k}/{\min_{1\leq i \leq k}{|h_{i,k}|}} = O(\sqrt{u})$,
where $\min_{1\leq i \leq k}{|h_{i,k}|}$ minimizes $|h_{i,k}|$ for $i$ such that $h_{i,k} \neq 0$. 
\end{enumerate}
\end{Thm}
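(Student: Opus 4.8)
The plan is to reduce the singular problem to a nonsingular one by exploiting the orthogonal splitting that range symmetry provides, to track the null-space content of the Arnoldi basis through a single coefficient vector, and then to read convergence off a scalar identity involving $h_{k+1,k}$. First I would record that ${\rm R}(A)={\rm R}(A^{\rm T})$ forces ${\rm N}(A)={\rm N}(A^{\rm T})$ and the orthogonal decomposition $\mathbb{R}^{n}={\rm R}(A)\oplus{\rm N}(A)$, so I may write $\vector{r}_{0}=\vector{p}+\vector{q}$ with $\vector{p}=P_{{\rm R}(A)}\vector{r}_{0}$ and $\vector{q}=P_{{\rm N}(A)}\vector{r}_{0}$. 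Writing the GMRES residual as $\vector{r}_{k}=\phi_{k}(A)\vector{r}_{0}$ with $\deg\phi_{k}\le k$ and $\phi_{k}(0)=1$, and using $A\vector{q}=\vector{0}$, gives $\vector{r}_{k}=\phi_{k}(A_{0})\vector{p}+\vector{q}$, where $A_{0}=A|_{{\rm R}(A)}$ is invertible. Hence the null part is frozen at $\vector{q}$, the least squares residual equals $\|\vector{q}\|_{2}$, and ``GMRES determines a least squares solution at step $k$'' is exactly ``$\phi_{k}(A_{0})\vector{p}=\vector{0}$'', i.e. GMRES on the compressed nonsingular system $A_{0}\vector{x}=\vector{p}$ has converged. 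The consistent case is the specialization $\vector{q}=\vector{0}$.

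Next I would introduce the null-space coefficients. Since the only source of null content is $\vector{v}_{1}$, every Arnoldi vector has the form $\vector{v}_{i}=\vector{v}_{i}^{\rm R}+\mu_{i}\,\vector{q}/\|\vector{q}\|_{2}$ with a common unit null direction. Projecting $AV_{k}=V_{k+1}H_{k+1,k}$ onto ${\rm N}(A)$ yields $H_{k+1,k}^{\rm T}\boldsymbol{\mu}_{k+1}=\vector{0}$ with $\boldsymbol{\mu}_{k+1}=(\mu_{1},\dots,\mu_{k+1})^{\rm T}$, while orthonormality of $V_{k+1}$ gives $\sum_{i=1}^{k+1}\mu_{i}^{2}=\|P_{K_{k+1}}(\vector{q}/\|\vector{q}\|_{2})\|_{2}^{2}\le1$. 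As long as $H_{k+1,k}$ has full column rank (no earlier breakdown), projecting $\beta\vector{e}_{1}$ onto ${\rm N}(H_{k+1,k}^{\rm T})={\rm span}\,\boldsymbol{\mu}_{k+1}$ gives the residual formula $\|\vector{r}_{k}\|_{2}=\|\vector{q}\|_{2}/\|\boldsymbol{\mu}_{k+1}\|_{2}$, so for the inconsistent case convergence is equivalent to $\|\boldsymbol{\mu}_{k+1}\|_{2}=1$. Isolating the last coordinate of $H_{k+1,k}^{\rm T}\boldsymbol{\mu}_{k+1}=\vector{0}$ produces the pivotal identity $H_{k,k}^{\rm T}\boldsymbol{\mu}_{k}=-h_{k+1,k}\mu_{k+1}\vector{e}_{k}$, which couples the subdiagonal $h_{k+1,k}$ to the near singularity of the square block $H_{k,k}$.

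In exact arithmetic this identity closes the argument: once the null direction is captured one has $\mu_{k+1}=0$, whence $H_{k,k}^{\rm T}\boldsymbol{\mu}_{k}=\vector{0}$ with $\boldsymbol{\mu}_{k}\neq\vector{0}$, so $H_{k,k}$ is singular and $h_{k+1,k}=0$; conversely $h_{k+1,k}=0$ together with the normal equations forces $\phi_{k}(A_{0})\vector{p}=\vector{0}$. The finite precision theorem replaces ``singular/zero'' by ``at the rounding level''. Using that the only errors enter through the computed $h_{i,j}$, I would show the computed $\|\boldsymbol{\mu}_{k+1}\|_{2}^{2}$ and residual carry relative perturbations $O(u)$; because the reducible residual $\|\vector{r}_{k}\|_{2}^{2}-\|\vector{q}\|_{2}^{2}$ is a smooth nonnegative quantity with minimum value $0$, it is only resolvable down to an $O(u)$ floor, and through the quadratic dependence $\|\boldsymbol{\mu}_{k}\|_{2}^{2}=h_{k+1,k}^{2}\mu_{k+1}^{2}\|H_{k,k}^{-{\rm T}}\vector{e}_{k}\|_{2}^{2}$ this floor corresponds to $h_{k+1,k}$ of order $\sqrt{u}$ times a scale, which explains the square root. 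For the inconsistent case ($\vector{q}\neq\vector{0}$) the near-null test is governed by the matrix size, giving the normalizer $\|H_{k,k}\|_{F}$; for the consistent case ($\vector{q}=\vector{0}$) the residual $\|\vector{r}_{k}\|_{2}=\beta\prod_{j}|s_{j}|$, with $s_{j}$ the sines of the Givens rotations that triangularize $H_{k+1,k}$, is resolved relative to the last Hessenberg column, giving the normalizer $\min_{1\le i\le k}|h_{i,k}|$.

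The structural part above (the splitting, the vector $\boldsymbol{\mu}$, the residual formula, and the identity) is routine linear algebra. The main obstacle is the finite precision half: proving rigorously, under the ``errors only in $h_{i,j}$'' model, that the computed $h_{k+1,k}$ both cannot fall below and does reach the $\sqrt{u}$ level precisely when $\phi_{k}(A_{0})\vector{p}$ is numerically zero, and that the correct scale is $\|H_{k,k}\|_{F}$ in the inconsistent case but $\min_{1\le i\le k}|h_{i,k}|$ in the consistent case. This demands a careful backward error analysis of the Arnoldi recurrence and of the Hessenberg least squares solve, in particular control of $\|H_{k,k}^{-{\rm T}}\vector{e}_{k}\|_{2}$ and of the Givens sines, and that is where I expect the real difficulty to concentrate.
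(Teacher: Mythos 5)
Your structural reduction is correct as far as it goes: range symmetry gives the orthogonal splitting $\mathbb{R}^{n}={\rm R}(A)\oplus{\rm N}(A)$, the null content of the Krylov space lies along the single direction $q=P_{{\rm N}(A)}r_{0}$, the coefficient vector $\boldsymbol{\mu}_{k+1}$ satisfies $H_{k+1,k}^{\rm T}\boldsymbol{\mu}_{k+1}=0$, and the identity $H_{k,k}^{\rm T}\boldsymbol{\mu}_{k}=-h_{k+1,k}\mu_{k+1}e_{k}$ follows. This is close in spirit to the decomposition the paper imports from Brown--Walker and from the earlier MINRES/GMRES singular-system papers it cites. But the theorem is not the structural part; it is the finite-precision ``if and only if'' with the two specific normalizers $\|H_{k,k}\|_{F}$ and $\min_{1\le i\le k}|h_{i,k}|$, and your proposal explicitly stops short of proving it. Your final paragraph concedes that showing $h_{k+1,k}$ reaches (and cannot otherwise reach) the $\sqrt{u}$ level exactly at numerical convergence, with the correct scale in each case, is ``where the real difficulty concentrates''; the heuristics you offer in its place (an $O(u)$ floor for a smooth nonnegative quantity, ``governed by the matrix size'', ``resolved relative to the last Hessenberg column'') are not arguments and do not establish either direction of either equivalence. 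That is a genuine gap, not a stylistic omission.

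It is also worth noting that the paper does not close this gap with the heavy backward error analysis you anticipate (control of $\|H_{k,k}^{-{\rm T}}e_{k}\|_{2}$, perturbation bounds for the Arnoldi recurrence); under its model---exact arithmetic except for the $h_{i,j}$, with ``$=O(\sqrt{u})$'' read as ``negligible at unit roundoff''---each step is short and elementary. For the inconsistent case, sufficiency comes from the Frobenius-norm identity $\|AV_{k}\|_{F}^{2}=\|H_{k,k}\|_{F}^{2}+h_{k+1,k}^{2}$ (which is precisely what makes $\|H_{k,k}\|_{F}$ the right scale: the hypothesis lets one replace $AV_{k}$ by $V_{k}H_{k,k}$ and invoke the known exact-arithmetic convergence result), and necessity comes from Brown--Walker's rank facts (${\rm rank}\,AV_{k}<k$, ${\rm rank}\,AV_{k-1}=k-1$): taking a null vector $x$ of $V_{k+1}H_{k+1,k}$, proving its last entry $x_{k}\neq 0$, and evaluating $x^{\rm T}H_{k,k}^{\rm T}H_{k,k}x+h_{k+1,k}^{2}x_{k}^{2}=0$ forces $h_{k+1,k}/\|H_{k,k}\|_{F}=O(\sqrt{u})$ by contradiction---this is essentially your identity $H_{k,k}^{\rm T}\boldsymbol{\mu}_{k}=-h_{k+1,k}\mu_{k+1}e_{k}$ put to quantitative use, which you never do. For the consistent case, the normalizer $\min_{1\le i\le k}|h_{i,k}|$ enters through a bound on the $k$th Givens sine, $s_{k}\le\alpha_{k}\,h_{k+1,k}/\min_{1\le i\le k}|h_{i,k}|$, combined with the residual recurrence $\|r_{k}\|_{2}=|s_{k}|\,\|r_{k-1}\|_{2}$ on the compressed nonsingular system; necessity follows by taking the inner product of the exactly vanishing residual with $v_{k+1}$, which produces the term $h_{k+1,k}|y_{k}^{(k)}|$ and a contradiction. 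Unless you supply arguments of this kind (or the rigorous rounding-error analysis you promise), your proposal remains a correct setup plus an accurate description of what is missing, rather than a proof.
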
 

\begin{proof}

First, consider the case when (\ref{eqn:probset}) is inconsistent.
In the Arnoldi process,
\begin{eqnarray}\label{eq:eqArno}
AV_{k} =   V_{k+1}H_{k+1,k} = V_{k}H_{k,k}+h_{k+1,k}[0,..,0,\vector{v}_{k+1}]
\end{eqnarray}
holds.
Here, $\vector{v}_{k+1}$ is the ($k$+1)th column vector of $V_{k+1}$
and $\vector{v}_{1},....,\vector{v}_{k+1}$ are orthonormal.

From (\ref{eq:eqArno}), 
${\|AV_{k}\|_{F}}^{2}= {\|V_{k}H_{k,k}\|_{F}}^{2} + 2(\sum_{i=1}^{k}h_{i,k}\vector{v}_{k}, \vector{v}_{k+1})
+ {h_{k+1,k}}^{2}{\|\vector{v}_{k+1}\|_{2}}^{2} ={\|V_{k}H_{k,k}\|_{F}}^{2} + {h_{k+1,k}}^{2}$ holds
since $\vector{v}_{k+1}$ is orthogonal to
all columns of $V_{k}$ and ${\|\vector{v}_{k+1}\|_{2}}^{2}=1$.

If $\frac{{h_{k+1,k}}^{2}}{{\|H_{k,k}\|_{F}}^{2}} = O(u)$ holds,
then we may regard $\frac{{h_{k+1,k}}^{2}}{{\|H_{k,k}\|_{F}}^{2}} \approx 0 $ in finite precision arithmetic.
Then, ${\|AV_{k}\|_{F}}^{2} \approx {\|V_{k}H_{k,k}\|_{F}}^{2}$ holds, 
since ${\|V_{k}H_{k,k}\|_{F}}^{2} = {\|H_{k,k}\|_{F}}^{2}$.
Hence, $AV_{k} \approx V_{k}H_{k,k}$ holds in finite precision arithmetic.

Refer to the proof of Theorem 1 in \cite{RMINRES}.
In the present proof, the preconditioner $M$ is an identity matrix and MINRES is replaced by GMRES.
The upper triangular matrix $T_{j}$ is replaced by the Hessenberg matrix $H_{j,j}$.
In order to prove the theorem, we will analyse GMRES
by decomposing it into the ${\rm R}(A)$ component and the ${{\rm R}(A)}^{\perp}$ component.
Using the approach in the proof of Theorem 1 in \cite{RMINRES}, 
we can prove that the ${\rm R}(A)$ component $\vector{x}_{1}^{k}$ of the $k$th iterate 
$ { \vector{x} }^{k} $ of GMRES
minimizes the $ { \rm R }(A) $ component of $ \| \vector{b} - A \vector{x} \|_{2} $ 
when $\frac{h_{k+1,k}}{\|H_{k+1,k}\|_{F}} = O(\sqrt{u})$ holds.
Hence, we can prove that the $k$th iterate ${\vector{x}}^{k}$ of GMRES
minimizes $ \| \vector{b} - A\vector{x} \|_{2} $ when $\frac{h_{k+1,k}}{\|H_{k+1,k}\|_{F}} = O(\sqrt{u})$ holds.

Now assume that GMRES determines a solution of 
$\min_{\vector{x}\in \mathbb{R}^{n}}\|\vector{b} - A\vector{x}\|_{2}$ at the $k$th step.
From Theorem 2.4 in \cite{Brown}, ${\rm rank}AV_{k} < {\rm rank}V_{k} = k$ and ${\rm rank}AV_{k-1} = k-1$ if 
(\ref{eqn:probset}) is inconsistent.
From (\ref{eq:eqArno}), $V_{k+1}H_{k+1,k}$ is rank-deficient.
Therefore, there exists $\vector{x} \neq \vector{0}$ such that $V_{k+1}H_{k+1,k}\vector{x} = \vector{0}$.

Let $x_{k}$ be the $k$th element of $\vector{x}$.
From (\ref{eq:eqArno}),\\
 $V_{k+1}H_{k+1,k}\vector{x} = V_{k}H_{k,k}\vector{x} + h_{k+1,k}x_{k}\vector{v}_{k+1} = \vector{0}$.
Hence,\\
 ${\vector{x}}^{\rm T}{H_{k,k}}^{\rm T}H_{k,k}\vector{x} + {h_{k+1,k}}^{2}{x_{k}}^{2} = 0$.
If $x_{k} = 0$,
 $V_{k}H_{k,k}\vector{x} = \vector{0}$. Since ${\rm rank}V_{k} = k$, $H_{k,k}\vector{x} = \vector{0}$.
Since $x_{k} = 0$, the first $k-1$ column vectors of $H_{k,k}$ are linearly dependent.
However, all columns of $H_{k,k-1}$ are linearly independent since ${\rm rank}AV_{k-1} = k-1$ and $AV_{k-1} = V_{k}H_{k,k-1}$.
This is a contradiction. Thus, $x_{k} \neq 0$.
\begin{eqnarray*}\label{eq:proofeq}
{\vector{x}}^{\rm T}{H_{k,k}}^{\rm T}H_{k,k}\vector{x} + {h_{k+1,k}}^{2}{x_{k}}^{2} 
 & = &  {x_{k}}^{2}(\frac{{\|H_{k,k}\vector{x}\|_{2}}^{2}}{{x_{k}}^{2}} + {h_{k+1,k}}^{2}) \\
 & \geq & {x_{k}}^{2}(\frac{{\|H_{k,k}\vector{x}\|_{2}}^{2}}{{\|\vector{x}\|_{2}}^{2}} + {h_{k+1,k}}^{2})
\end{eqnarray*}
Assume $\frac{h_{k+1,k}}{\|H_{k,k}\|_{F}} > O(\sqrt{u})$. Then, ${h_{k+1,k}}^{2} > 0$.
If $\|H_{k,k}\vector{x}\|_{2} = 0$, then \\
${x_{k}}^{2}(\frac{{\|H_{k,k}\vector{x}\|_{2}}^{2}}{{\|\vector{x}\|_{2}}^{2}} + {h_{k+1,k}}^{2}) > 0$ since
${h_{k+1,k}}^{2} > 0$.
If $\|H_{k,k}\vector{x}\|_{2} > 0$, then ${x_{k}}^{2}(\frac{{\|H_{k,k}\vector{x}\|_{2}}^{2}}{{\|\vector{x}\|_{2}}^{2}} + {h_{k+1,k}}^{2}) > 0$.
Then, ${\vector{x}}^{\rm T}{H_{k,k}}^{\rm T}H_{k,k}\vector{x} + {h_{k+1,k}}^{2}{x_{k}}^{2} > 0$.
This is a contradiction.
Thus, 
 $\frac{h_{k+1,k}}{\|H_{k,k}\|_{F}} = O(\sqrt{u})$.

Next, we will prove the theorem for the singular consistent system.
In order to prove the theorem for the consistent system, we will analyze
GMRES by decomposing it into the ${\rm R}(A)$ component and the ${\rm R}(A)^{\perp}$ component.
Using the same approach as \cite{hayamiM,RMINRES}, the ${\rm R}(A)$ component of the decomposed 
GMRES for the consistent system is equivalent to GMRES applied to a nonsingular system.
Here, we let the nonsingular system be $A_{11}\tilde{\vector{x}} = \vector{b}_{1}$.
Furthermore,
the ${\rm R}(A)^{\perp}$ components of GMRES are $\vector{0}$ when the initial vector $\vector{x}_{0} = \vector{0}$.
We will refer to the proof of Proposition 6.10 in \cite{Saad}
which proves
that $h_{j+1,j} = 0$ if and only if the approximate solution $\vector{x}_{j}$ of GMRES is exact for nonsingular systems.
Assume that $\frac{h_{k+1,k}}{\min_{1\leq i \leq k}|h_{i,k}|} = O(\sqrt{u})$ holds.
As in \cite{Saad}, the scalars $c_{i}$ 
and $s_{i}$ of the $i$th Givens rotation $\Omega_{i}$ 
are defined as $s_{i} = \frac{h_{i+1,i}}{\sqrt{(h_{i,i}^{(i-1)})^{2} + {h_{i+1,i}}^{2}}}$, \\
$c_{i} = \frac{h_{i,i}^{(i-1)}}{\sqrt{(h_{i,i}^{(i-1)})^{2} + {h_{i+1,i}}^{2}}}$
where  ${h_{i,i}}^{(i-1)}$ is a linear combination of $h_{1,i},h_{2,i},...,h_{i,i}$
by $\Omega_{i-1}\Omega_{i-2}...\Omega_{1}$. 
Since the ${\rm R}(A)$ component of the decomposed 
GMRES is equivalent to GMRES applied to a nonsingular system, then $r_{k,k} = h_{k,k}^{(k-1)}$
is nonzero by the first part of Proposition 6.9 in \cite{Saad}.
Then, 
there exists a positive scalar $\alpha_{k}$ which satisfies the following inequality.
Here, $\alpha_{k}$ is independent of $h_{k+1,k}$.
\begin{eqnarray*}
s_{k} & = & \frac{h_{k+1,k}}{\sqrt{(h_{k,k}^{(k-1)})^{2} + {h_{k+1,k}}^{2}}} \\
       & \leq & \alpha_{k} \times \frac{h_{k+1,k}}{\min_{1 \leq i \leq k}|h_{i,k}|} \\
       & = & O(\sqrt{u})
\end{eqnarray*}
Since $s_{k} = O(\sqrt{u})$, we may regard ${s_{k}}^{2} \approx 0$ in finite precision arithmetic. \\
Then, the relation ${\|\vector{b}_{1} - A_{11}\tilde{\vector{x}}_{k}\|_{2}}^{2} 
= {s_{k}}^{2}{\|\vector{b}_{1} - A_{11}\tilde{\vector{x}}_{k-1}\|_{2}}^{2}$
implies that ${\|\vector{b}_{1} - A_{11}\tilde{\vector{x}}_{k}\|_{2}}^{2} = 0$.\\
Therefore,  ${\|\vector{b} - A\vector{x}_{k}\|_{2}}^{2} = {\|\vector{b}_{1} - A_{11}\tilde{\vector{x}}_{k}\|_{2}}^{2} = 0$.

Now assume that GMRES determines a solution of $\min_{\vector{x}\in \mathbb{R}^{n}}\|\vector{b} - A\vector{x}\|_{2}$ at the $k$th step.
We will prove by contradiction that $\frac{h_{k+1,k}}{\min_{1\leq i \leq k}{|h_{i,k}|}} = O(\sqrt{u})$ holds.
Assume $\frac{h_{k+1,k}}{\min_{1\leq i \leq k}{|h_{i,k}|}} > O(\sqrt{u})$.
Then, $\vector{v}_{k+1}$ of the orthonormal basis exists.
Since (\ref{eqn:probset}) is consistent, there exists a nonzero vector $\vector{y}_{k}$ and the $k$th element
$y_{k}^{(k)} \neq 0$ which satisfies
$\|\beta\vector{v}_{1} - A[\vector{v}_{1},...,\vector{v}_{k}]\vector{y}_{k}\|_{2} = 0$. Here, 
$\beta = \|\vector{b} - A\vector{x}_{0}\|_{2}$ where $\vector{x}_{0}$ is an initial solution vector.
\begin{eqnarray*}
|(\beta\vector{v}_{1} - A[\vector{v}_{1},...,\vector{v}_{k}]\vector{y}_{k}, \vector{v}_{k+1})| & = & 
h_{k+1,k}|y_{k}^{(k)}|{\|\vector{v}_{k+1}\|_{2}}^{2} \\
& = & h_{k+1,k}|y_{k}^{(k)}| \\
& > & |y_{k}^{(k)}|\min_{1 \leq i \leq k}|h_{i,k}|O(\sqrt{u})
\end{eqnarray*}
Since $y_{k}^{(k)} \neq 0$ and $\min_{1\leq i \leq k}|h_{i,k}| > 0$, then 
$|y_{k}^{(k)}|\min_{1 \leq i \leq k}|h_{i,k}|O(\sqrt{u}) > 0$.\\
However, $\|\beta\vector{v}_{1} - A[\vector{v}_{1},...,\vector{v}_{k}]\vector{y}_{k}\|_{2} = 0$.
This is a contradiction.
Thus, $\frac{h_{k+1,k}}{\min_{1\leq i\leq k}|h_{i,k}|} = O(\sqrt{u})$.

\qed\end{proof}

If all computations are done in exact arithmetic, GMRES determines a solution 
of $\min_{\vector{x}\in \mathbb{R}^{n}}\|\vector{b} - A\vector{x}\|_{2}$ when $h_{k+1,k}=0$.
When $h_{k+1,k}=0$ holds, $H_{k,k}$ is singular (See \cite{RMINRES}, Theorem 1, point a, b; \cite{HS}, Theorem 4).

On the other hand, in Theorem \ref{Thm:StabGmres}, 
when $h_{k+1,k}/\|H_{k,k}\|_{F} > O(\sqrt{u})$ for the incosistent systems or 
$h_{k+1,k}/{\min_{1\leq i \leq k}{|h_{i,k}|}} > O(\sqrt{u})$ for the consistent systems,
GMRES does not converge, whereas when $h_{k+1,k}/\|H_{k,k}\|_{F} = O(\sqrt{u})$ for the incosistent systems or $h_{k+1,k}/{\min_{1\leq i \leq k}{|h_{i,k}|}} = O(\sqrt{u})$ for the consistent systems, GMRES converges to a least squares solution of (\ref{eqn:probLS}).

However, numerical experiments in Sections \ref{sec:NumerStab} and \ref{sec:StabReorth} for inconsistent systems indicate that
$\displaystyle \frac{\|A^{\rm T}\vector{r}\|_{2}}{\|A^{\rm T}\vector{b}\|_{2}}$ becomes very small when the smallest singular value of 
$H_{k+1,k}$ is very small, but $h_{k+1,k}$ is not small unlike in Theorem \ref{Thm:StabGmres}.
We think the numerical result concerning $h_{k+1,k}$ is different from Theorem \ref{Thm:StabGmres} due to rounding errors when
$\displaystyle \frac{\|A^{\rm T}\vector{r}\|_{2}}{\|A^{\rm T}\vector{b}\|_{2}}$ becomes very small. 
This is because Theorem \ref{Thm:StabGmres} takes rounding errors in to account only for the computation of $h_{i,j}$ and explains the relation between 
$\displaystyle \frac{\|A^{\rm T}\vector{r}\|_{2}}{\|A^{\rm T}\vector{b}\|_{2}}$ 
and $h_{k+1,k}$. That is, corresponding to the convergence theory of GMRES in \cite{Brown}, 
Theorem \ref{Thm:StabGmres} is the convergence theory considering rounding errors 
only for the computation of $h_{i,j}$.

\section{GMRES USING PSEUDOINVERSE}\label{sec:Stabsec}
In finite precision arithmetic, the backward substitution of GMRES does not work well 
when $H_{k+1,k}$ becomes severely ill-conditioned.
Therefore, the GMRES solution is inaccurate when $H_{k+1,k}$ becomes severely ill-conditioned.
Thus, we will propose GMRES using pseudoinverse in order to improve 
the accuracy of the GMRES solution.

Assume that $R(A) = R(A^{{\rm T}})$ holds. Consider inconsistent systems 
where \\ 
$\vector{b} \notin R(A)$ in (\ref{eqn:probset}).
GMRES converges to 
a least squares solution without breakdown at some step, then GMRES breaks down at the 
next step, with breakdown through rank deficiency of the least squares problems \cite{Brown,hayamiM}. 
Rank deficiency of the least squares problems means the Hessenberg matrix is rank deficient\cite{Brown}.
Rank deficiency of the Hessenberg matrix means that the smallest singular value $\sigma_{k}(H_{k+1,k})$ is 0.
Hence, numerically, the condition number of the Hessenberg matrix $H_{k+1,k} \in \mathbb{R}^{(k+1)\times k}$ in (\ref{eq:algmresH})
becomes extremely large (~$O(\frac{1}{u})$ where $u$ is the unit roundoff \cite{Hig}) for inconsistent systems 
when GMRES converges to a least squares solution. (See Fig. \ref{fig:Ar-sigma-h-pinv-ms},
\ref{fig:Ar-sigma-h-pinv-32}, \ref{fig:Ar-sigma-h-pinv-re2-ms}, 
\ref{fig:Ar-sigma-h-pinv-re2-32}.)
We apply Givens rotation to $\min_{\vector{y} \in  \mathbb{R}^{k}}\|\beta\vector{e}_{1} - H_{k+1,k}\vector{y}\|_{2}$.
Then the upper triangular system $R_{k}\vector{y}=\vector{g}_{k}$
is generated. Since the condition number
$\kappa(H_{k+1,k})=\kappa(R)$, if the condition number of $H_{k+1,k}$ is too large, then
the backward substitution for $R_{k}\vector{y}=\vector{g}_{k}$ does not work well due to rounding errors.
Hence, GMRES does not converge well.

In order to solve this difficulty,
we propose using pseudoinverse for solving (\ref{eq:algmresH}) as follows.

\vspace{6pt}
\hspace{-13pt}{\bf Algorithm 1 : GMRES using pseudoinverse (essence) }
\vspace{4pt}
\\1: Compute $y = {H_{k+1,k}}^{\dag}\beta\vector{e}_{1}$ where
${H_{k+1,k}}^{\dag}$ is the pseudoinverse of $H_{k+1,k}$.
\\2: Compute the solution $\vector{x}_{k}=\vector{x}_{0}+V_{k}y$.

Here,  $y = {H_{k+1,k}}^{\dag}\beta\vector{e}_{1}$ is the minimum-norm solution 
of
$\min_{\vector{y}_{k} \in  \mathbb{R}^{k}}\|\beta\vector{e}_{1} - H_{k+1,k}\vector{y}_{k}\|_{2}$ \cite{Bjorck}.  

${H_{k+1,k}}^{\dag}$ is defined as follows.

\vspace{6pt}
\hspace{-13pt}{\bf Definition 2 : Pseudoinverse of $B$}
\vspace{4pt}
\\1: Let the singular value decomposition of $B$ be $B = U\Sigma V^{\rm T}$ where $U\in {\rm \mathbb{R}}^{m\times m}$ and 
$V \in {\rm \mathbb{R}}^{n\times n}$ are orthogonal matrices, $\Sigma \in {\rm \mathbb{R}}^{m\times n}$ is
the diagonal matrix whose diagonal elements are the singular values
$\sigma_{1} \geq ...\geq \sigma_{r} > 0$, $r = {\rm rank}B$, $\sigma_{i} = 0$, $i = r + 1, ..., \min\{m,n\}$. 
\\2: Then, $B^{\dag} = V{{\Sigma}^{\dag}}U^{\rm T}$.
Here, 
$\Sigma^{\dag} \in {\rm \mathbb{R}}^{n\times m}$ is the diagonal matrix whose diagonal elements are\\
${\sigma_1}^{-1} \leq ... \leq {\sigma_r}^{-1}$, ${\sigma_{i}}^{\dag} = 0$, $i = r + 1, ..., \min\{m,n\}$.

We use pinv in MATLAB for computing the pseudoinverse.
pinv for the matrix $B \in {\rm \mathbb{R}}^{m\times n}$ is defined as follows.

\vspace{6pt}
\hspace{-13pt}{\bf Algorithm 3 : pinv in MATLAB }
\vspace{4pt}
\\1:  Let the singular value decomposition of $B$ be $B = U{\Sigma}V^{\rm T}$ as above.
\\2: Set the tolerance value $tol$. The diagonal elements of $\Sigma$ which are smaller than $tol$ are replaced by zero
to give
\begin{eqnarray*}\label{eq:Asvd1}
\left[
\begin{array}{cc}
\Sigma_{1} & 0 \\
0                                 &  0                            
\end{array}
\right].
\end{eqnarray*}
Then, let
\begin{eqnarray}\label{eq:Asvd}
\tilde{B} := [U_{1},U_{2}]\left[
\begin{array}{cc}
\Sigma_{1} & 0 \\
0                                 &  0                            
\end{array}
\right][V_{1},V_{2}]^{\rm T}
=U_{1}{{\Sigma}_{1}}{V_{1}}^{\rm T}.
\end{eqnarray} 
where $ U=[U_{1},U_{2}], ~V=[V_{1},V_{2}] $.
\\3: $ \tilde{B}^{\dag} :=V_{1}{\Sigma_{1}}^{-1}{ U_{1} }^{\rm T}$.

In {\bf Algorithm 3}, the default value of the tolerance value $tol$ is \\
$\max\{m,n\}\times {\rm eps}(\|B\|_{2})$ for $B \in {\rm \mathbb{R}}^{m\times n}$.
Here,
\begin{itemize}
\item $d = {\rm eps}(x)$, where $x$ has data type single or double, returns the positive 
distance $d$ from $|x|$ to the next larger floating-point number of the same precision as $x$.   
\end{itemize}

$\max\{m,n\}\times {\rm eps}(\|B\|_{2})$ is called the numerical rank \cite{Bjorck}.

Here, let $ \sigma_{1} ( H_{ k+1,k } ) $ be the largest singular value of $ H_{k+1,k} $, and 
$ \sigma_{k} ( H_{k+1,k} ) $ be the smallest singular value of $H_{k+1,k}$.
Table~\ref{table:cond_of_H} indicates the condition number of $H_{k+1,k}$ and ${\tilde{H}_{k+1,k}}^{~~~~~~\dag}$.

\begin{table}[htbp]
\begin{center}
\caption{Condition number of $H_{k+1,k}$ and ${\tilde{H}_{k+1,k}}^{~~~~~~\dag}$}
\label{table:cond_of_H}
\begin{tabular}{|c|r|r|r|r|r|r|}
\hline
Matrix & Condition number \\
\hline
\hline
$H_{k+1,k}$ &  $\frac{\sigma_{1}(H_{k+1,k})}{\sigma_{k}(H_{k+1,k})}$  \\
\hline
${\tilde{H}_{k+1,k}}^{~~~~~~\dag}$ & $\frac{\sigma_{1}(H_{k+1,k})}{tol}$ \\
\hline
\end{tabular}
\end{center}
\end{table}

As $k$ increases, $\sigma_{k}(H_{k+1,k})$ decreases. Hence,
the condition number of $H_{k+1,k}$, i.e. $\frac{\sigma_{1}(H_{k+1,k})}{\sigma_{k}(H_{k+1,k})}$ may become too large.
Thus, the backward substitution for $R_{k}\vector{y}=\vector{g}_{1}$ may not work well since the condition number
$\frac{\sigma_{1}(H_{k+1,k})}{\sigma_{k}(H_{k+1,k})}$
is too large. On the other hand, 
if we truncate the singular values which are smaller than $tol$ using pinv of $H_{k+1,k}$, 
since $\sigma_{k}(H_{k+1,k})$ is smaller than $tol$, we truncate $\sigma_{k}(H_{k+1,k})$.
Then, $\frac{\sigma_{1}(H_{k+1,k})}{tol}$ is smaller than $\frac{\sigma_{1}(H_{k+1,k})}{\sigma_{k}(H_{k+1,k})}$. Hence, 
GMRES using pseudoinverse becomes more stable than GMRES.

\section{NUMERICAL EXPERIMENTS ON EVALUATION OF GMRES USING PSEUDOINVERSE}\label{sec:NumerStab}
In this section, we evaluate the effectiveness of GMRES using pseudoinverse for range symmetric 
singular systems.
To do so, we compare the performance and the convergence of GMRES using pseudoinverse,
GMRES and Range Restricted GMRES (RRGMRES)\cite{CLR1,Ry} (See also \cite{RRMINRES,ReiNeu}.) by numerical experiments.

We compare GMRES using pseudoinverse with RRGMRES since RRGMRES
works better than GMRES for inconsistent range symmetric systems.
The initial approximate vector is set to $\vector{0}$.
We evaluate the performance of each method by $\displaystyle \frac{\|A^{\rm T}\vector{r}\|_{2}}{\|A^{\rm T}\vector{b}\|_{2}}$
where $\vector{r}=\vector{b}-A\vector{x}_{k}$ and $\vector{x}_{k}$ is an approximate solution 
at the $k$th step.

Computation except for {\bf Algorithm 1} of GMRES using pseudoinverse
were done on a PC with Intel(R) Core(TM) i7-7500U 2.70 GHz CPU, Cent OS and double precision
floating arithmetic.
GMRES and RRGMRES were coded in Fortran 90 and compiled by Intel Fortran.
The method to code GMRES using pseudoinverse is as follows.
Here, $H_{i,j}$ is the Hessenberg matrix and all the column vectors of $V_{k}$ form an orthonormal basis
generated by the Arnoldi process.
\begin{enumerate}
\item $H_{i,j}$ and $V_{k}$ are computed by Fortran 90.
\item Write $H_{i,j}$ and $V_{k}$ into the ascii formatted files by Fortran 90.
\item Read the files of $H_{i,j}$ and $V_{k}$ in MATLAB.
\item The pseudoinverse 
${\tilde{H}_{i,j}}^{~~\dag}$
and the solution 
$ \vector{x}_{k} = \vector{x}_{0} + V_{k} { {\tilde{H} }_{k+1,k}}^{~~~~~~\dag}\beta\vector{e}_{1}$ 
are computed using pinv of MATLAB.
\end{enumerate} 
The version of MATLAB is R2018b.

\subsection{GMRES USING PSEUDOINVERSE FOR SYMMETRIC MATRICES}\label{sec:pinvsym}
We will first use symmetric numerical positive semidefinite matrices from \cite{Florida}.
The information on these matrices is described in Table~\ref{table:matinfo}.
Here, $n$ and $nnz$ are the dimension and the number of
nonzero elements of the matrices, respectively.
rank$A$, $\kappa(A)$ are 
the dimension of ${\rm R}(A)$
and the condition number (the ratio of the maximum
singular value divided by the minimum singular value of) $A$, respectively.
They were computed by the function {\bf rank}
and {\bf svd} of MATLAB, respectively.

\begin{table}[htbp]
\begin{center}
\caption{Characteristics of the coefficient matrices of the test problems}
\label{table:matinfo}
\begin{tabular}{|c|r|r|r|r|r|}
\hline
Matrix & n & nnz &rank $A$ & $\kappa(A)$ & Application area \\
\hline
\hline
msc01050 &  1,050 & 26,198 & 1,049 & $ 8.997 \times 10^{15} $ & structural problem \\
\hline
ex32 & 1,159 & 11,047  & 1,158   & $ 1.3546 \times 10^{18} $ & CFD \\
\hline
\end{tabular}
\end{center}
\end{table}

For the above two matrices,
the right hand side vectors $\vector{b}$ were set as follows, where $\vector{b}_{\rm{N}(A)}$ 
is a unit eigenvector corresponding to the smallest eigenvalue of $A$.
\begin{itemize}
\item $\vector{b} = \frac{A \times (1,1,.,1)^{\rm{T}}}{\|A \times (1,1,..,1)^{\rm{T}}\|_{2}} + \vector{b}_{\rm{N}(A)}\times 0.01$
\end{itemize}
Thus, the systems are inconsistent.

For symmetric singular systems, Minimal Residual (MINRES)\cite{Minres} 
and Range Restricted MINRES\\(RRMINRES)\cite{RRMINRES} (See also \cite{ReiNeu,DMR}.) methods  
should converge to a least squares solution in exact arithmetic. However, in finite precision arithmetic, 
they show ill-convergence for inconsistent systems as seen in 
Fig.  \ref{fig:Ar-mi-ms}, \ref{fig:Ar-rrmi-ms}, \ref{fig:Ar-mi-32} and \ref{fig:Ar-rrmi-32}.
This is because MINRES and RRMINRES use short-term recurrence, 
and are affected by rounding errors, especially for ill-conditioned inconsistent systems. 
GMRES and RRGMRES are more robust as seen in Fig. \ref{fig:Ar-pinv-gm-rr-ms}
and \ref{fig:Ar-pinv-gm-rr-32},
since they use full orthogonalization of the Arnoldi process. 
The contribution of the present paper is to make GMRES even more robust for ill-conditioned, inconsistent systems.
We will also report numerical results of MINRES and RRMINRES for the same symmetric singular systems.

Fig. \ref{fig:Ar-pinv-gm-rr-ms} for {\bf msc01050}
and 
Fig. \ref{fig:Ar-pinv-gm-rr-32} for {\bf ex32} 
show $\displaystyle \frac{\|A\vector{r}_{j}\|_{2}}{\|A\vector{b}\|_{2}}$ versus 
the iteration number for GMRES using pseudoinverse (blue), GMRES (red) and 
RRGMRES (green) for inconsistent problems. (Note $A^{\rm T} = A$ for these problems)

Fig. \ref{fig:Ar-sigma-h-pinv-ms} for {\bf msc01050}
and 
Fig. \ref{fig:Ar-sigma-h-pinv-32} for {\bf ex32} 
show $\displaystyle \frac{\|A\vector{r}_{j}\|_{2}}{\|A\vector{b}\|_{2}}$  (blue),
$\frac{\sigma_{k}(H_{k+1,k})}{\sigma_{1}(H_{k+1,k})}$ (red) and 
$\frac{h_{j+1,j}}{\|H_{j,j}\|_{F}}$ (green) versus 
the iteration number for GMRES using pseudoinverse for inconsistent problems.

Fig. \ref{fig:Ar-mi-ms} for {\bf msc01050}
and
Fig. \ref{fig:Ar-mi-32} for {\bf ex32} 
show $\displaystyle \frac{\|A\vector{r}_{j}\|_{2}}{\|A\vector{b}\|_{2}}$ versus 
the iteration number for MINRES for inconsistent problems.

Fig. \ref{fig:Ar-rrmi-ms} for {\bf msc01050}
and 
Fig. \ref{fig:Ar-rrmi-32} for {\bf ex32}
show $\displaystyle \frac{\|A\vector{r}_{j}\|_{2}}{\|A\vector{b}\|_{2}}$ versus 
the iteration number for RRMINRES for inconsistent problems.

\begin{figure}[htbp]
\begin{minipage}{0.5\hsize}
\begin{center}
\includegraphics[keepaspectratio,scale=0.4]{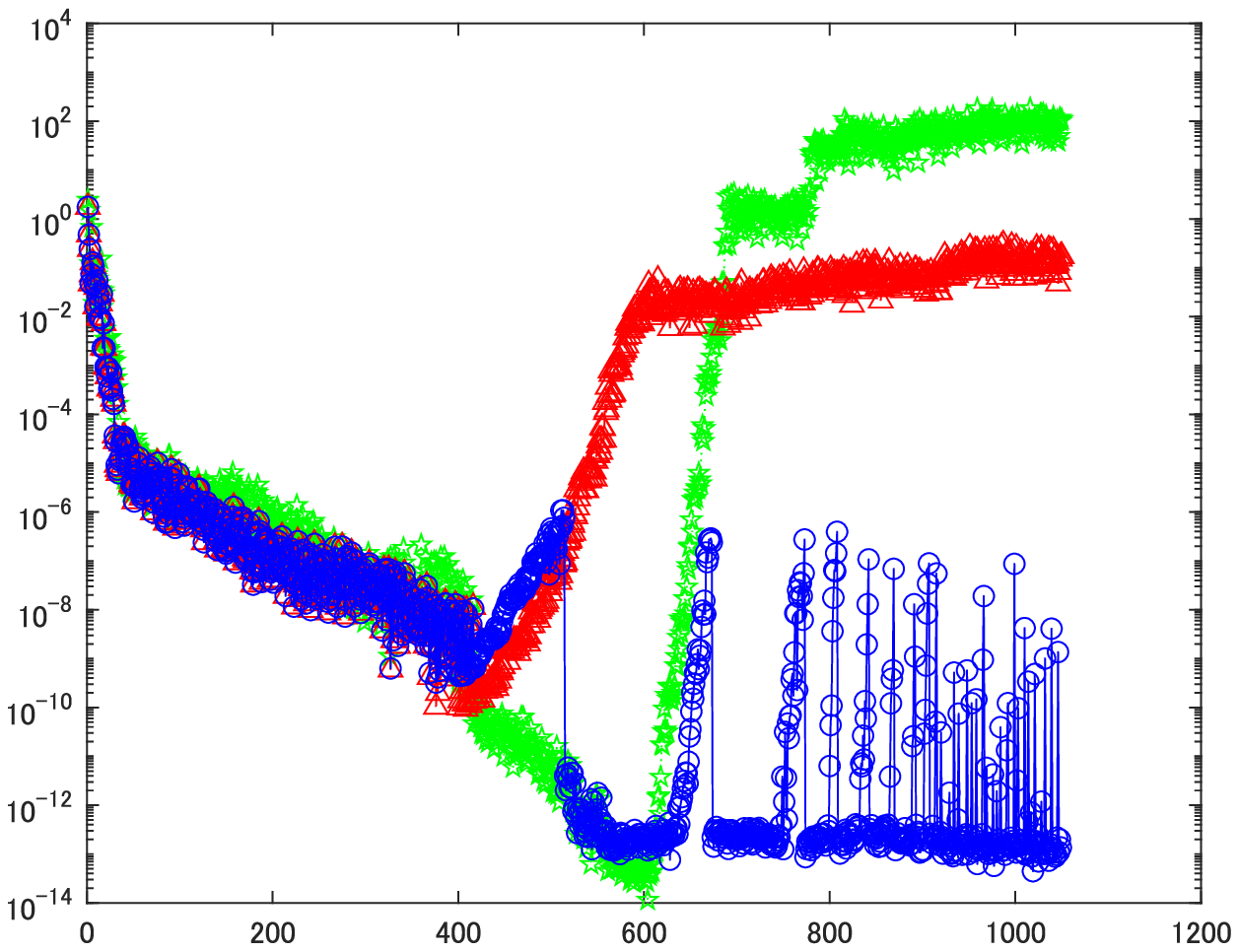}
\end{center}
\captionsetup{width=.95\linewidth}
\caption{$\displaystyle \frac{\|A\vector{r}_{j}\|_{2}}{\|A\vector{b}\|_{2}}$ vs. number of iterations for 
GMRES using pseudoinverse (blue), 
GMRES (red), 
and RRGMRES (green) for an inconsistent problem ({\bf msc01050})}
\label{fig:Ar-pinv-gm-rr-ms}
\end{minipage}
\begin{minipage}{0.5\hsize}
\begin{center}
\includegraphics[keepaspectratio,scale=0.4]{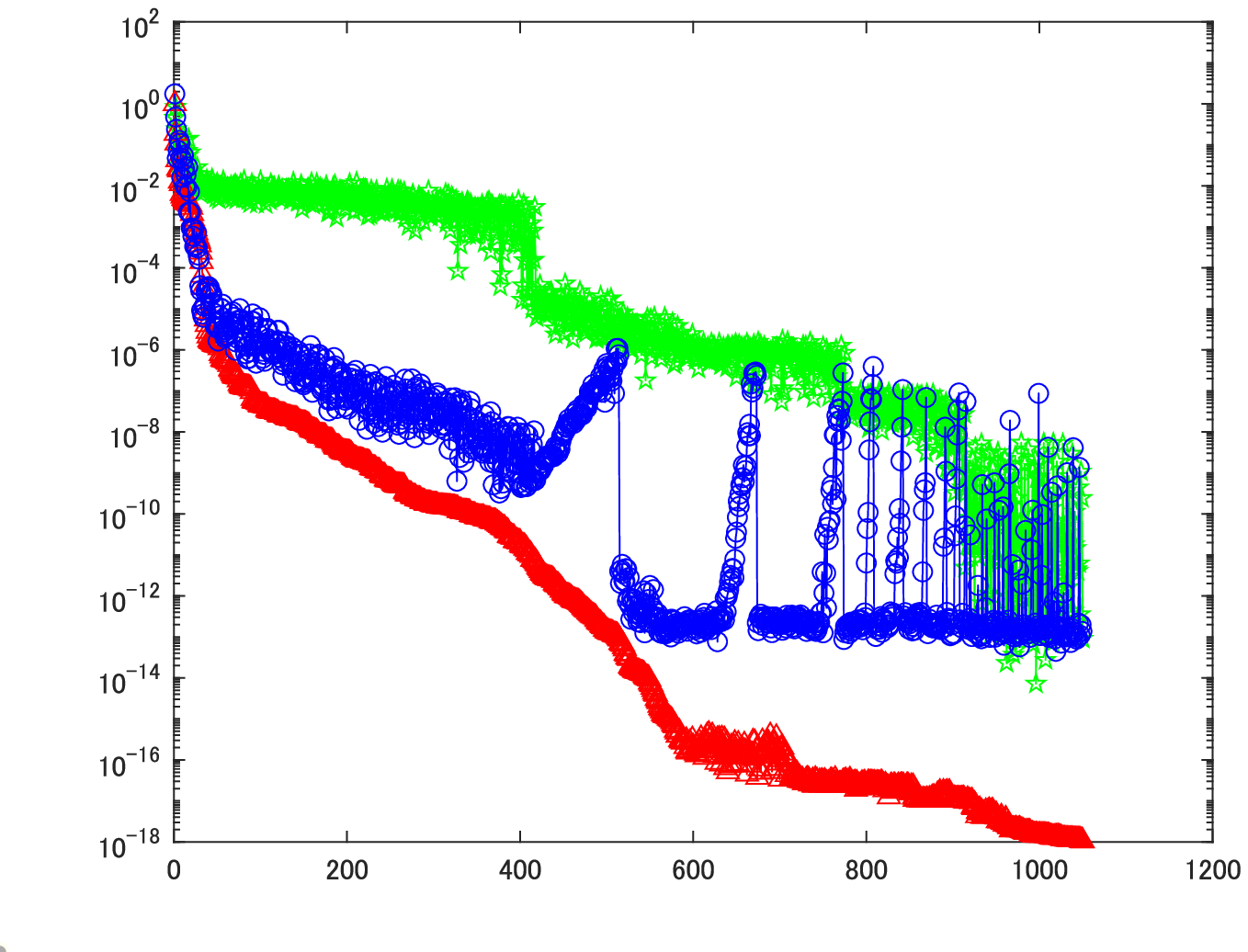}
\end{center}
\captionsetup{width=.95\linewidth}
\caption{$\displaystyle \frac{\|A\vector{r}_{j}\|_{2}}{\|A\vector{b}\|_{2}}$ (blue), 
$\frac{\sigma_{k}(H_{k+1,k})}{\sigma_{1}(H_{k+1,k})}$ (red) and 
$\frac{h_{j+1,j}}{\|H_{j,j}\|_{F}}$ (green) vs. number of iterations for 
GMRES using pseudoinverse for an inconsistent problem ({\bf msc01050})}
\label{fig:Ar-sigma-h-pinv-ms}
\end{minipage}
\end{figure}

\begin{figure}[htbp]
\begin{minipage}{0.5\hsize}
\begin{center}
\includegraphics[keepaspectratio,scale=0.4]{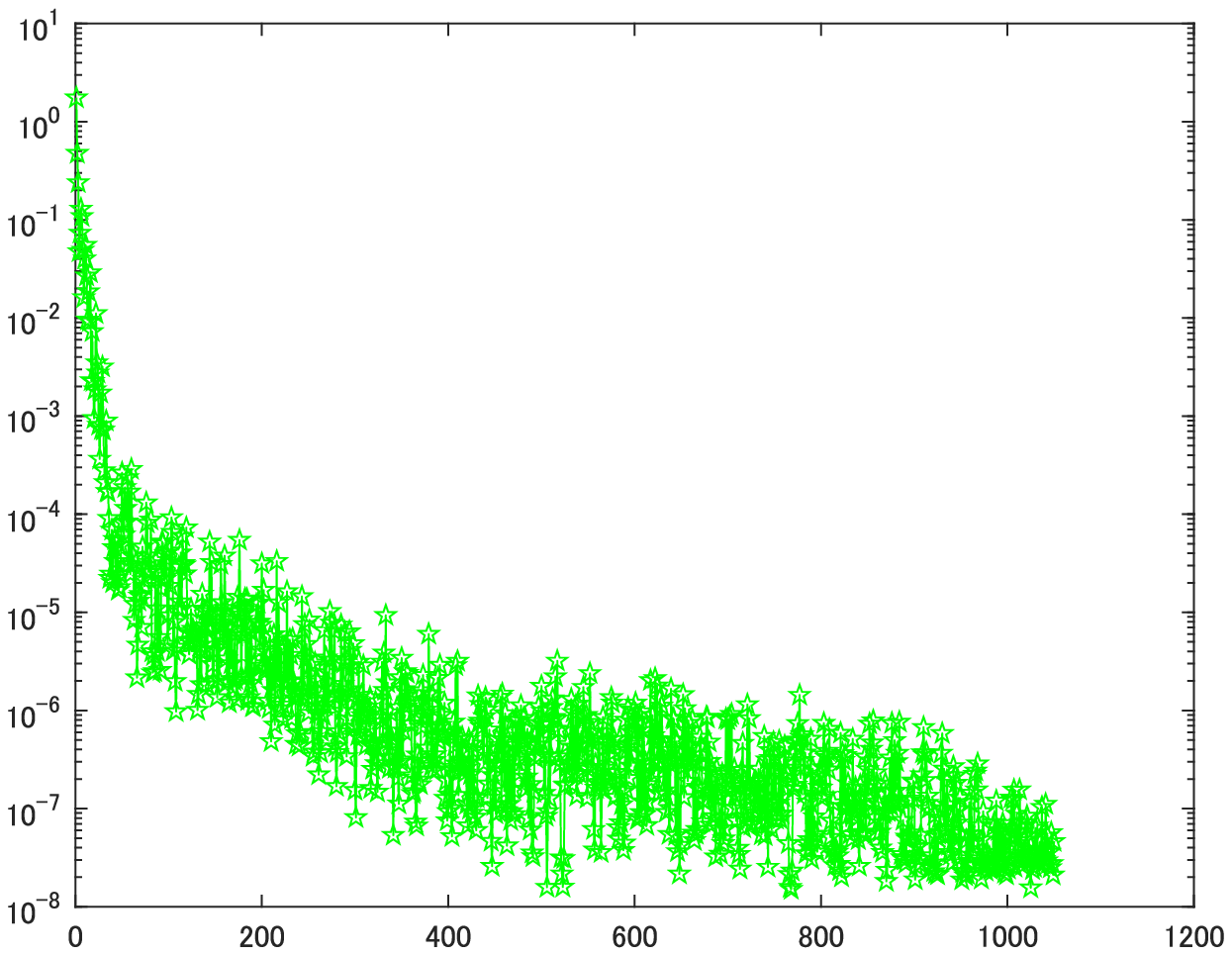}
\end{center}
\captionsetup{width=.95\linewidth}
\caption{$\displaystyle \frac{\|A\vector{r}_{j}\|_{2}}{\|A\vector{b}\|_{2}}$ vs. number of iterations for MINRES for an inconsistent problem ({\bf msc01050})}
\label{fig:Ar-mi-ms}
\end{minipage}
\begin{minipage}{0.5\hsize}
\begin{center}
\includegraphics[keepaspectratio,scale=0.4]{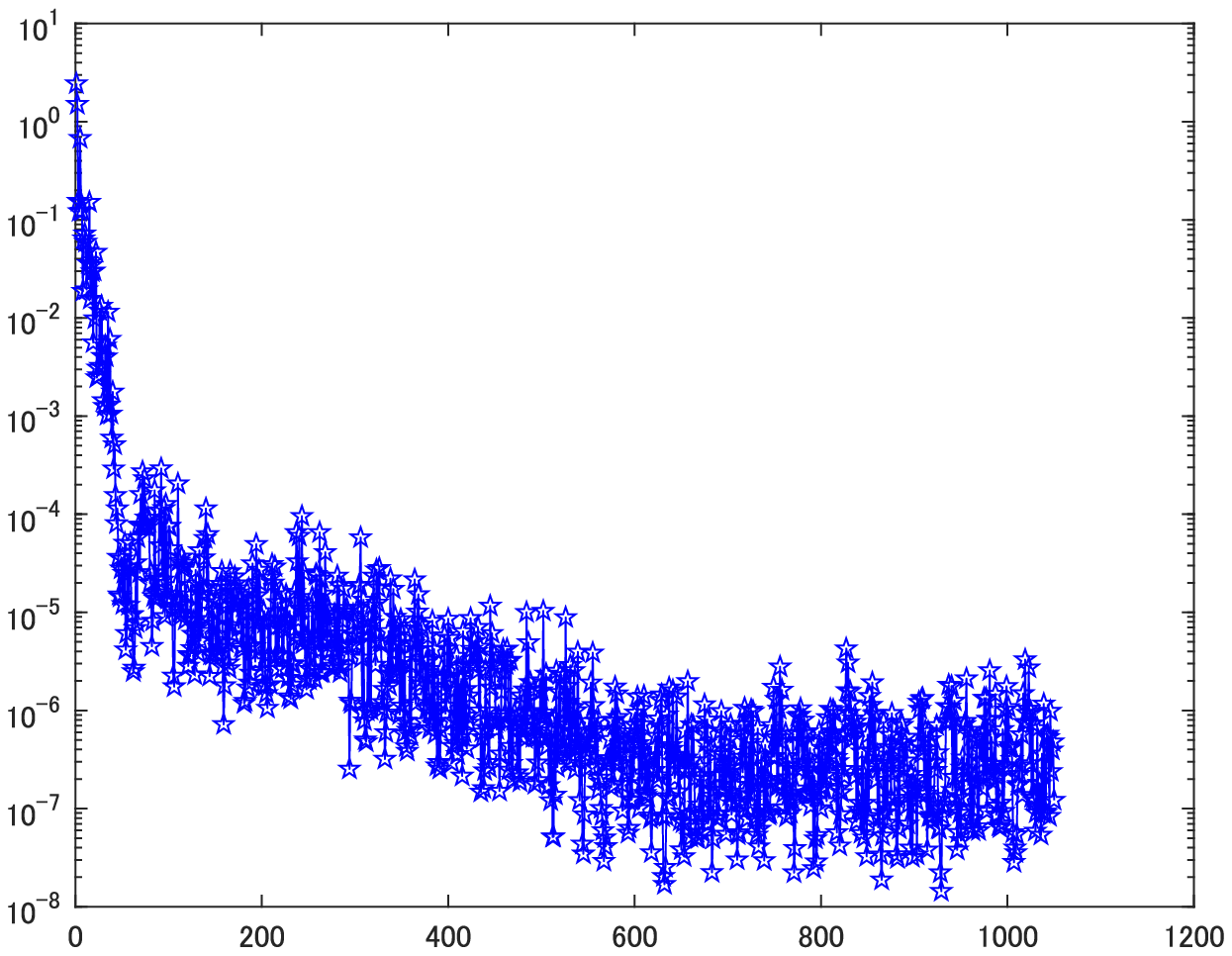}
\end{center}
\captionsetup{width=.95\linewidth}
\caption{$\displaystyle \frac{\|A\vector{r}_{j}\|_{2}}{\|A\vector{b}\|_{2}}$ vs. number of iterations for RRMINRES for an inconsistent problem ({\bf msc01050})}
\label{fig:Ar-rrmi-ms}
\end{minipage}
\end{figure}

\begin{figure}[htbp]
\begin{minipage}{0.5\hsize}
\begin{center}
\includegraphics[keepaspectratio,scale=0.4]{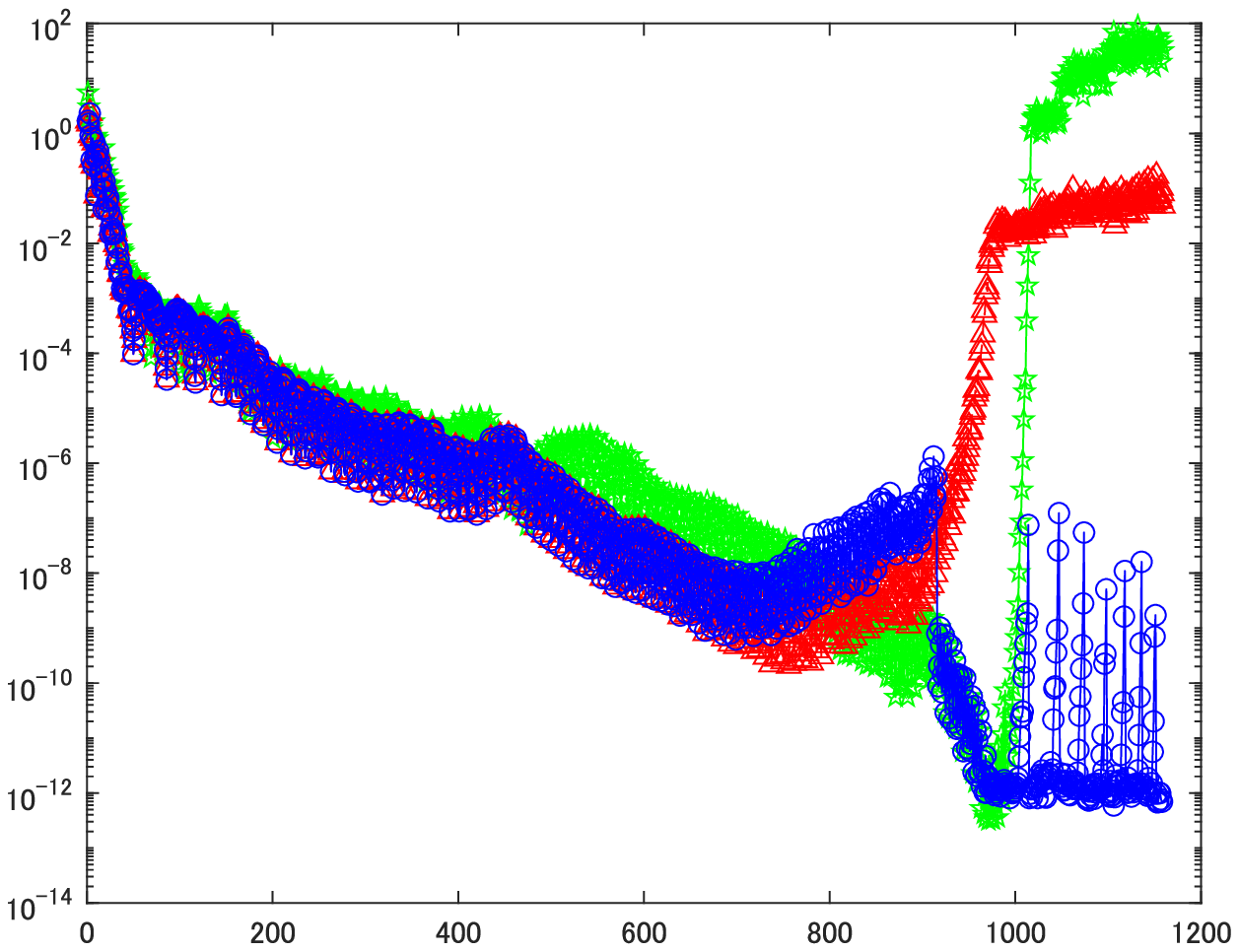}
\end{center}
\captionsetup{width=.95\linewidth}
\caption{$\displaystyle \frac{\|A\vector{r}_{j}\|_{2}}{\|A\vector{b}\|_{2}}$ vs. number of iterations for 
GMRES using pseudoinverse (blue), 
GMRES (red), 
and RRGMRES (green) for an inconsistent problem ({\bf ex32})}
\label{fig:Ar-pinv-gm-rr-32}
\end{minipage}
\begin{minipage}{0.5\hsize}
\begin{center}
\includegraphics[keepaspectratio,scale=0.4]{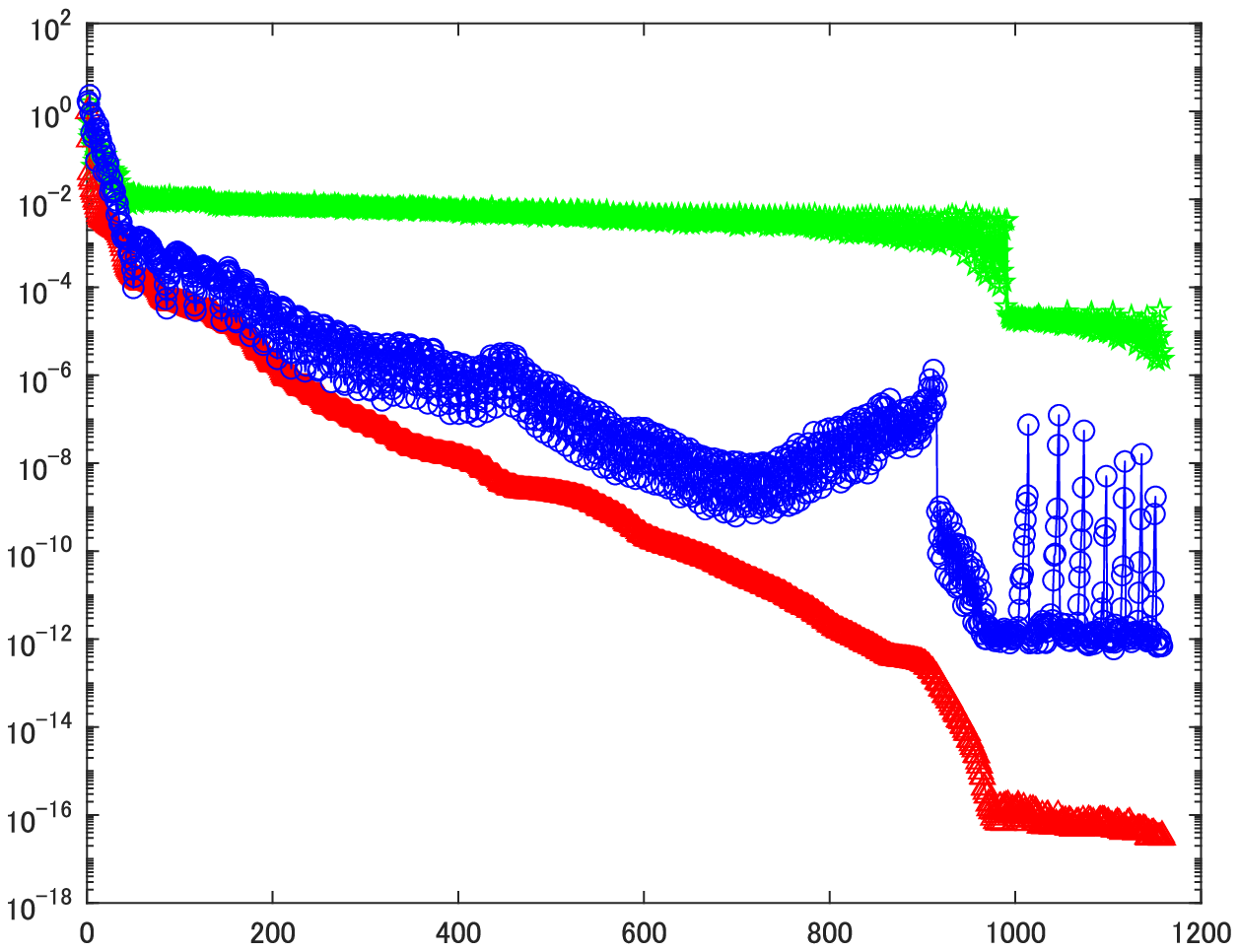}
\end{center}
\captionsetup{width=.95\linewidth}
\caption{$\displaystyle \frac{\|A\vector{r}_{j}\|_{2}}{\|A\vector{b}\|_{2}}$ (blue), 
$\frac{\sigma_{k}(H_{k+1,k})}{\sigma_{1}(H_{k+1,k})}$ (red) and 
$\frac{h_{j+1,j}}{\|H_{j,j}\|_{F}}$ (green) vs. number of iterations for 
GMRES using pseudoinverse for an inconsistent problem ({\bf ex32})}
\label{fig:Ar-sigma-h-pinv-32}
\end{minipage}
\end{figure}

\begin{figure}[htbp]
\begin{minipage}{0.5\hsize}
\begin{center}
\includegraphics[keepaspectratio,scale=0.4]{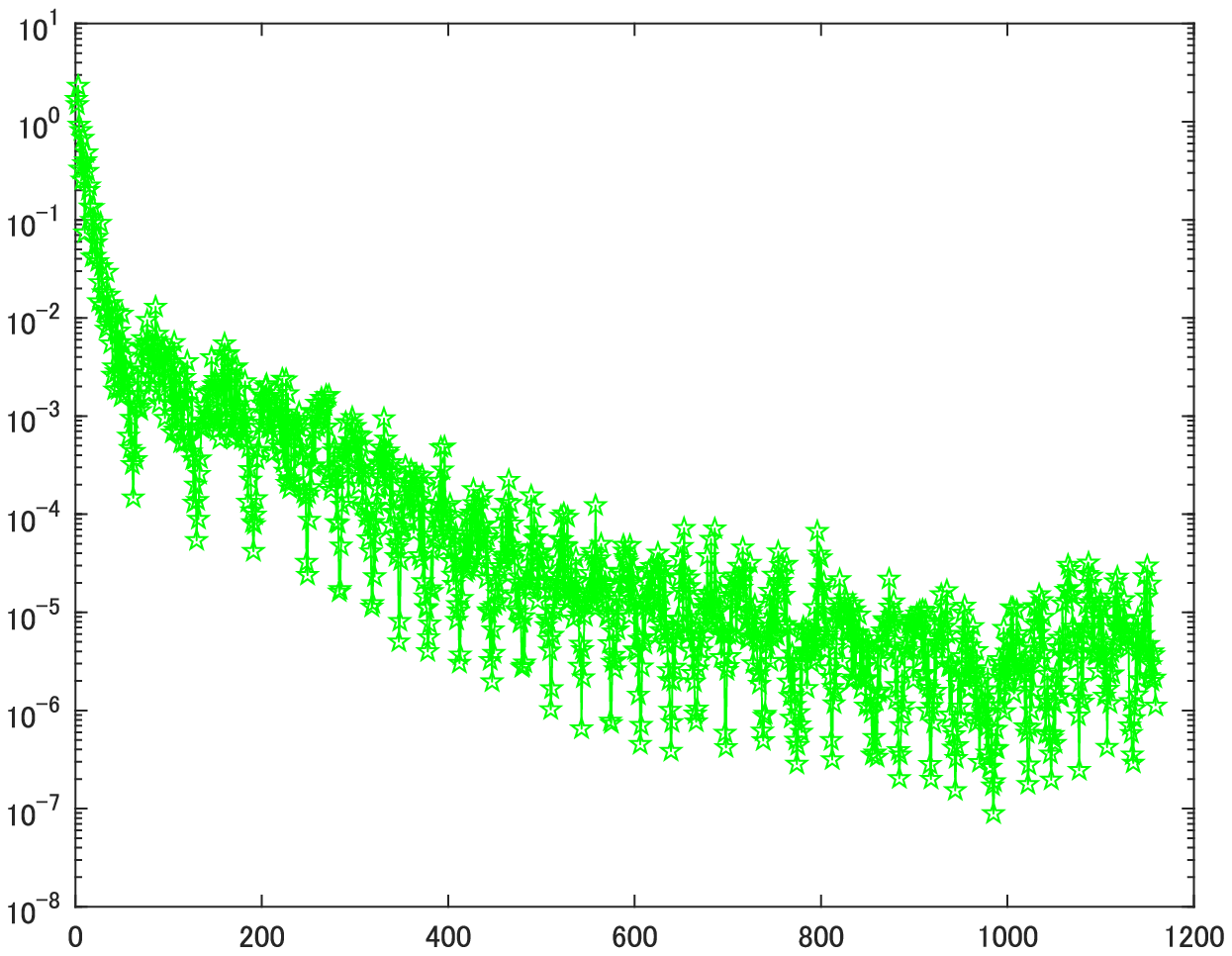}
\end{center}
\captionsetup{width=.95\linewidth}
\caption{$\displaystyle \frac{\|A\vector{r}_{j}\|_{2}}{\|A\vector{b}\|_{2}}$ vs. number of iterations for MINRES for an inconsistent problem ({\bf ex32})}
\label{fig:Ar-mi-32}
\end{minipage}
\begin{minipage}{0.5\hsize}
\begin{center}
\includegraphics[keepaspectratio,scale=0.4]{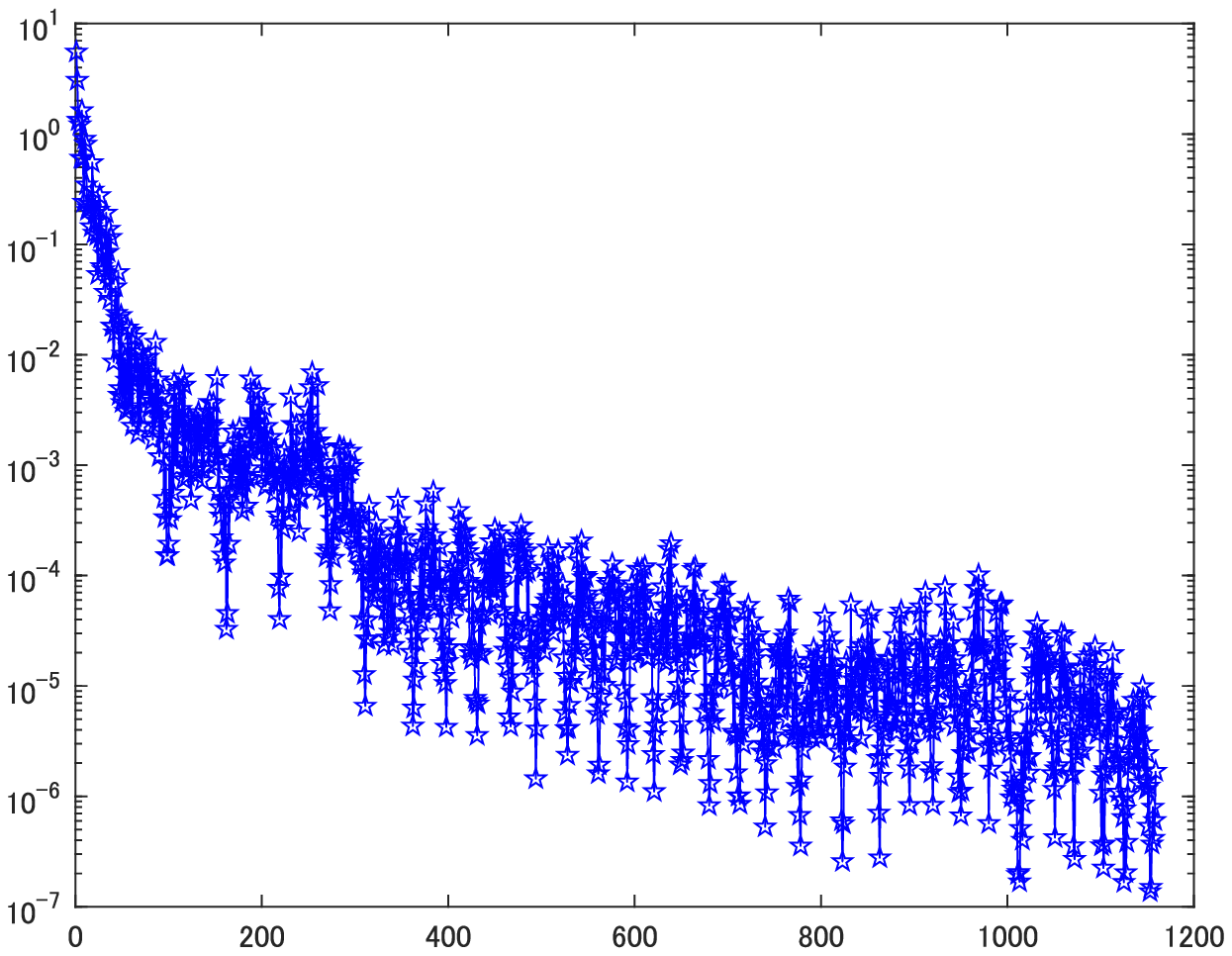}
\end{center}
\captionsetup{width=.95\linewidth}
\caption{$\displaystyle \frac{\|A\vector{r}_{j}\|_{2}}{\|A\vector{b}\|_{2}}$ vs. number of iterations for RRMINRES for an inconsistent problem ({\bf ex32})}
\label{fig:Ar-rrmi-32}
\end{minipage}
\end{figure}

We observe the following from Fig. \ref{fig:Ar-pinv-gm-rr-ms} and  Fig. \ref{fig:Ar-pinv-gm-rr-32}.
\begin{itemize}
\item The smallest value of $\displaystyle \frac{\|A\vector{r}_{j}\|_{2}}{\|A\vector{b}\|_{2}}$ of RRGMRES 
is smaller than the smallest values of $\displaystyle \frac{\|A\vector{r}_{j}\|_{2}}{\|A\vector{b}\|_{2}}$ of GMRES 
using pseudoinverse and GMRES.
\item $\displaystyle \frac{\|A\vector{r}_{j}\|_{2}}{\|A\vector{b}\|_{2}}$ of RRGMRES and GMRES diverges.
On the other hand, $\displaystyle \frac{\|A\vector{r}_{j}\|_{2}}{\|A\vector{b}\|_{2}}$ of GMRES using pseudoinverse 
does not diverge, although it oscillates.
\item $\displaystyle \frac{\|A\vector{r}_{j}\|_{2}}{\|A\vector{b}\|_{2}}$ of GMRES using 
pseudoinverse drastically decreases each time the smallest singular value of $H_{k+1,k}$ 
is truncated by pinv. 
\item $\displaystyle \frac{\|A\vector{r}_{j}\|_{2}}{\|A\vector{b}\|_{2}}$ of GMRES using 
pseudoinverse becomes smallest even when $\frac{h_{j+1,j}}{\|H_{j,j}\|_{F}}$ is not $O(\sqrt{u})$ 
(cf. Theorem \ref{Thm:StabGmres}). 
\end{itemize}

From Fig. \ref{fig:Ar-pinv-gm-rr-ms} and Fig. \ref{fig:Ar-rrmi-ms} for {\bf msc01050}, 
Fig. \ref{fig:Ar-pinv-gm-rr-32} and Fig. \ref{fig:Ar-rrmi-32} for {\bf ex32}, 
the smallest value of $\displaystyle \frac{\|A\vector{r}_{j}\|_{2}}{\|A\vector{b}\|_{2}}$ of RRGMRES is much smaller than
the smallest value of $\displaystyle \frac{\|A\vector{r}_{j}\|_{2}}{\|A\vector{b}\|_{2}}$ of RRMINRES for
inconsistent problems.
Thus, even for symmetric singular systems, RRGMRES and GMRES are better than RRMINRES and MINRES in finite
precision arithmetic.

\subsection{GMRES USING PSEUDOINVERSE FOR RANGE SYMMETRIC SYSTEMS}\label{sec:pinviryu}
Next, we will experiment with the following nonsymmetric but range symmetric system which arises from the 
finite difference discretization of a partial differential equation with periodic boundary condition as in \cite{Brown}.
\begin{eqnarray*}
\Delta u + d \frac{\partial u}{\partial x_{1}} = x_{1} + x_{2},~~x=(x_{1}, x_{2}) \in \Omega \equiv [0,1] \times [0,1]  \\
u(x_{1}, 0) = u(x_{1}, 1)~~and~~u(0, x_{2}) =  u(1,x_{2})~~for~~0 \leq x_{1}, x_{2} \leq 1 
\end{eqnarray*}
We discretized this boundary value problem with the usual second-order
centered differences on a $\rm{100} \times \rm{100}$ mesh with equally spaced discretization points, so that
the resulting linear systems are of dimension 10,000.
Assume that the matrix $A$ arises from this discretization. $A$ is normal and 
$ N(A) = N(A^{\rm T}) = R(A)^{\rm {\bot} } = {\rm span}{(1,1,...,1)}^{\rm T} $. Then, $A$ is range symmetric but nonsymmetric.
The right hand side vector $\vector{b}$ is a discretization of $x_{1}+x_{2}$. For $A$ and this $\vector{b}$,
(\ref{eqn:probset}) is inconsistent.
We apply GMRES using pseudoinverse to (\ref{eqn:probLS}).

Fig. \ref{fig:Ar-pinv-gm-rr-iryu} shows $\displaystyle \frac{\|A^{\rm T}\vector{r}_{j}\|_{2}}{\|A^{\rm T}\vector{b}\|_{2}}$ versus 
the iteration number for GMRES using pseudoinverse (blue), GMRES (red) and 
RRGMRES (green) for this inconsistent problem.
Fig. \ref{fig:Ar-sigma-h-pinv-iryu} show $\displaystyle \frac{\|A^{\rm T}\vector{r}_{j}\|_{2}}{\|A^{\rm T}\vector{b}\|_{2}}$  (blue),
$\frac{\sigma_{k}(H_{k+1,k})}{\sigma_{1}(H_{k+1,k})}$ (red) and 
$\frac{h_{j+1,j}}{\|H_{j,j}\|_{F}}$ (green) versus 
the iteration number for GMRES using pseudoinverse for this inconsistent problem.

\begin{figure}[htbp]
\begin{minipage}{0.5\hsize}
\begin{center}
\includegraphics[keepaspectratio,scale=0.4]{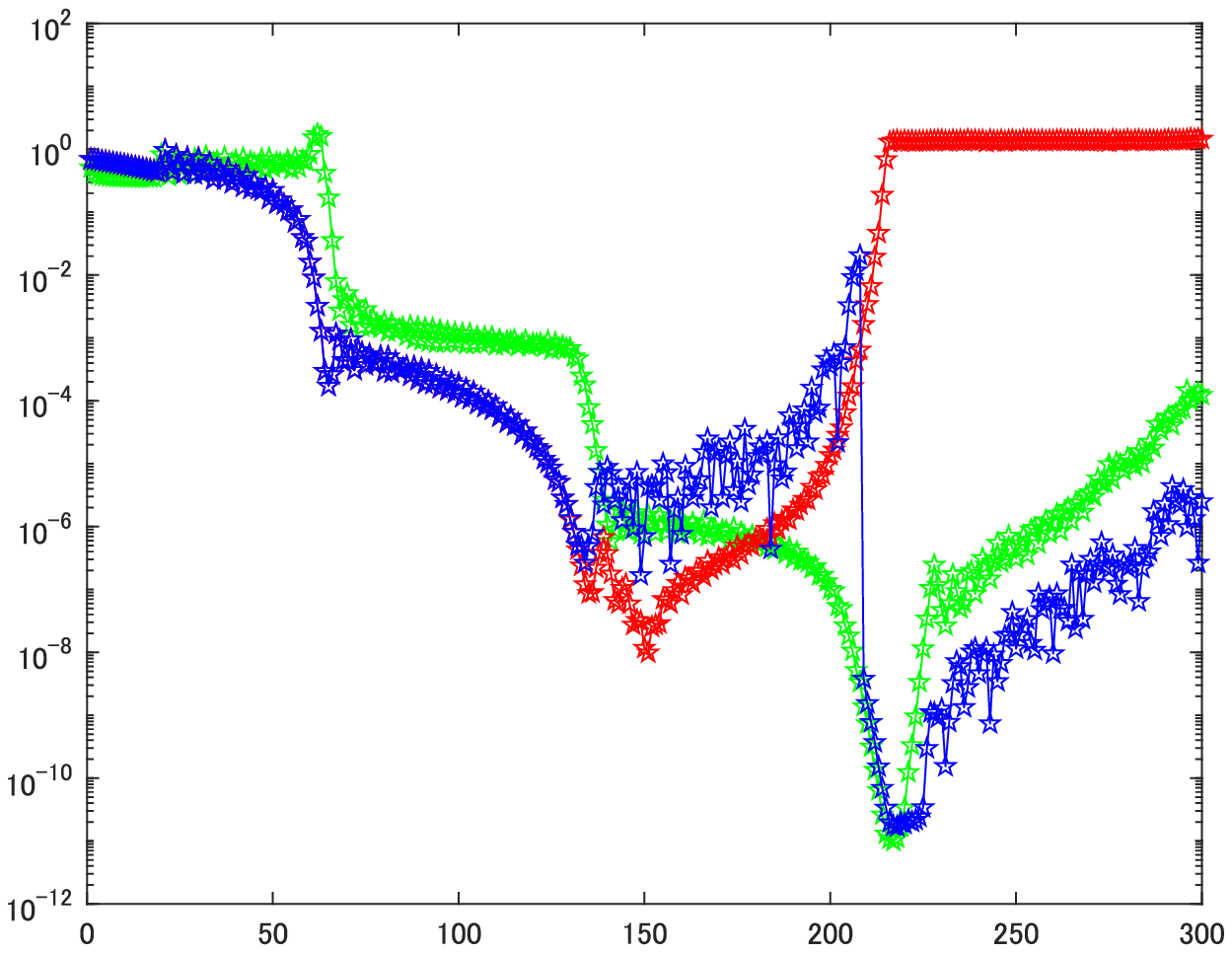}
\end{center}
\captionsetup{width=.95\linewidth}
\caption{$\displaystyle \frac{\|A^{\rm T}\vector{r}_{j}\|_{2}}{\|A^{\rm T}\vector{b}\|_{2}}$ vs. number of iterations for 
GMRES using pseudoinverse (blue), 
GMRES (red), 
and RRGMRES (green) for a range symmetric inconsistent problem}
\label{fig:Ar-pinv-gm-rr-iryu}
\end{minipage}
\begin{minipage}{0.5\hsize}
\begin{center}
\includegraphics[keepaspectratio,scale=0.4]{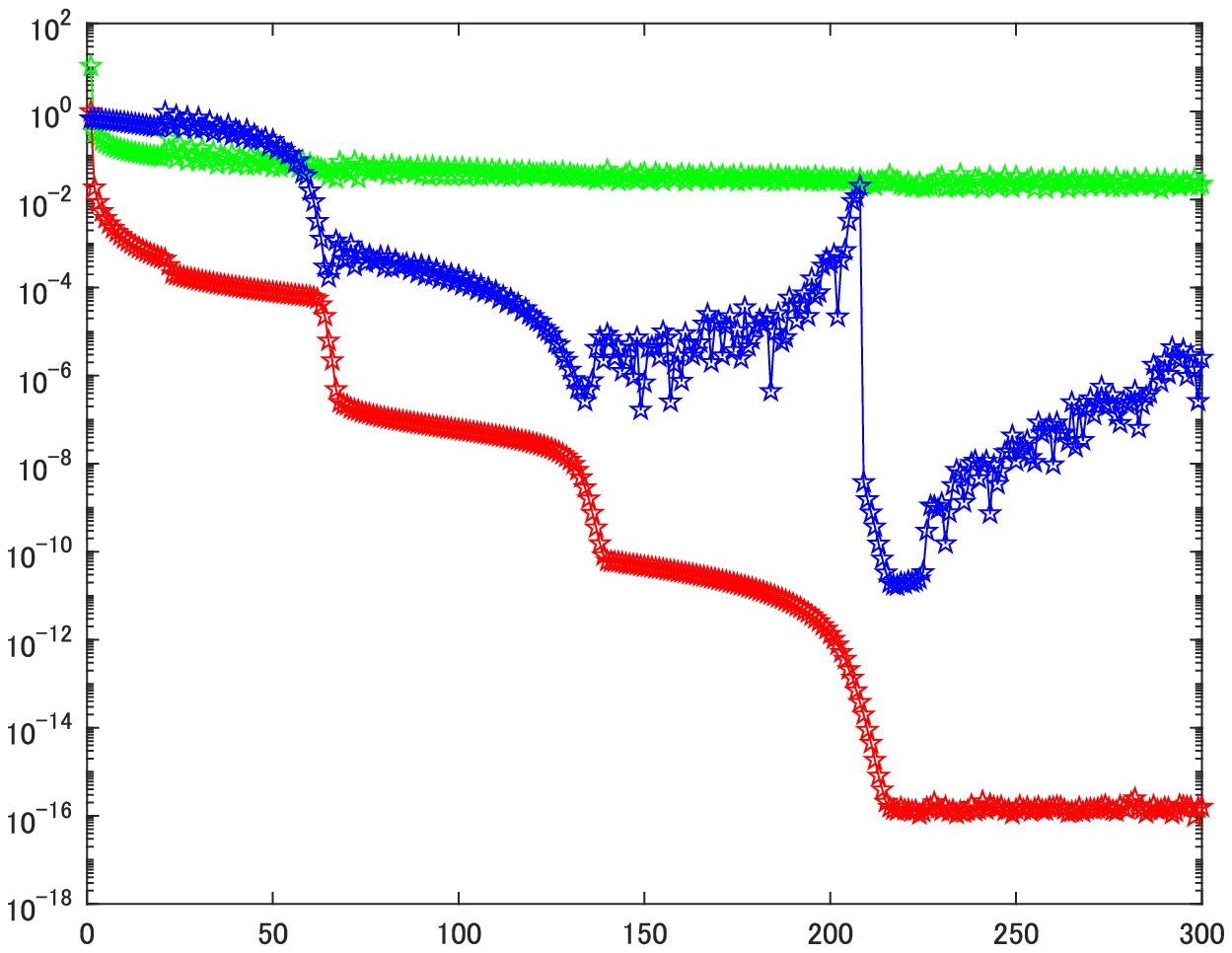}
\end{center}
\captionsetup{width=.95\linewidth}
\caption{$\displaystyle \frac{\|A^{\rm T}\vector{r}_{j}\|_{2}}{\|A^{\rm T}\vector{b}\|_{2}}$ (blue), 
$\frac{\sigma_{k}(H_{k+1,k})}{\sigma_{1}(H_{k+1,k})}$ (red) and 
$\frac{h_{j+1,j}}{\|H_{j,j}\|_{F}}$ (green) vs. number of iterations for 
GMRES using pseudoinverse for a range symmetric inconsistent problem}
\label{fig:Ar-sigma-h-pinv-iryu}
\end{minipage}
\end{figure}

We observe the following from Fig. \ref{fig:Ar-pinv-gm-rr-iryu}.
\begin{itemize}
\item The smallest value of $\displaystyle \frac{\|A^{\rm T}\vector{r}_{j}\|_{2}}{\|A^{\rm T}\vector{b}\|_{2}}$ of RRGMRES 
is smaller than the smallest values of $\displaystyle \frac{\|A^{\rm T}\vector{r}_{j}\|_{2}}{\|A^{\rm T}\vector{b}\|_{2}}$ of GMRES 
using pseudoinverse and GMRES.
\item  $\displaystyle \frac{\|A^{\rm T}\vector{r}_{j}\|_{2}}{\|A^{\rm T}\vector{b}\|_{2}}$ of GMRES diverges. After 221 iteration steps, 
$\displaystyle \frac{\|A^{\rm T}\vector{r}_{j}\|_{2}}{\|A^{\rm T}\vector{b}\|_{2}}$ of GMRES using pseudoinverse is smaller 
than $\displaystyle \frac{\|A^{\rm T}\vector{r}_{j}\|_{2}}{\|A^{\rm T}\vector{b}\|_{2}}$ of RRGMRES.
\item $\displaystyle \frac{\|A^{\rm T}\vector{r}_{j}\|_{2}}{\|A^{\rm T}\vector{b}\|_{2}}$ of GMRES using pseudoinverse increases after 
$\displaystyle \frac{\|A^{\rm T}\vector{r}_{j}\|_{2}}{\|A^{\rm T}\vector{b}\|_{2}}$ of this method becomes smallest.
\item  $\displaystyle \frac{\|A^{\rm T}\vector{r}_{j}\|_{2}}{\|A^{\rm T}\vector{b}\|_{2}}$ of GMRES using 
pseudoinverse becomes smallest 
even when 
$\frac{h_{j+1,j}}{\|H_{j,j}\|_{F}} > O(\sqrt{u})$
(cf. Theorem \ref{Thm:StabGmres}).
\end{itemize}

In the next section, we will further improve the convergence of GMRES using pseudoinverse by 
using reorthogonalization of the Arnoldi process to suppress the oscillation and the increasing of the residual norm.

\section{NUMERICAL EXPERIMENT ON GMRES USING PSEUDOINVERSE AND REORTHOGONALIZATION}\label{sec:StabReorth}
We think that  $\displaystyle \frac{\|A^{\rm T}\vector{r}_{j}\|_{2}}{\|A^{\rm T}\vector{b}\|_{2}}$ of 
GMRES using pseudoinverse oscillates because the column vectors of $V_{k}$ become linearly dependent.
Thus, we think that we can remove the oscillation of $\displaystyle \frac{\|A^{\rm T}\vector{r}_{j}\|_{2}}{\|A^{\rm T}\vector{b}\|_{2}}$ of 
GMRES using pseudoinverse by keeping the linear independence of the column vectors of $V_{k}$ by
reorthogonalization, as proposed in \cite{Liao}.

The algorithm of the reorthogonalization part in the Modified Gram-Schmidt with reorthogonalization is as follows.

\vspace{6pt}
\hspace{-13pt}{\bf Algorithm 4 : Reorthogonalization part of the Modified Gram-Schmidt with reorthogonalization}
\vspace{4pt}
\\1:~~$h_{i,j} = (\vector{v}_{i}, \vector{v}_{j})~(i=1,2,...j)$ 
\\2:~~$\vector{w}=A\vector{v}_{j} - {{\sum^{j}}_{i=1}}h_{i,j}\vector{v}_{i}$
\\3:~~$\hat{\vector{v}}_{j+1} = \vector{w} - {{\sum^{j}}_{i=1}}(\vector{w}, \vector{v}_{i})$ 
\\4:~~$h_{j+1,j}=\|\hat{\vector{v}}_{j+1}\|_{2}$
\\5:~~If $h_{j+1,j} \ne 0$, $\vector{v}_{j+1} = \frac{\hat{\vector{v}}_{j+1}}{h_{j+1,j}}$
\vspace{4pt}

In {\bf Algorithm 4}, line 3 is the reorthogonalization part.
For the same inconsistent systems in the previous section,
we will report the numerical results on GMRES using pseudoinverse and reorthogonalization.

Here, let $\sigma_{k-1}(H_{k+1,k})$ be the 2nd smallest singular value of $H_{k+1,k}$, 
$\sigma_{k-2}(H_{k+1,k})$ be the 3rd smallest singular value of $H_{k+1,k}$ 
and $\sigma_{k-3}(H_{k+1,k})$ be the 4th smallest singular value of $H_{k+1,k}$. 

\subsection{GMRES USING PSEUDOINVERSE AND REORTHOGONALIZATION FOR SYMMETRIC MATRICES}\label{sec:preinvsym}

Fig. \ref{fig:Ar-pinv-re2-gm-re2-rr-ms} for {\bf msc01050} 
and
Fig. \ref{fig:Ar-pinv-re2-gm-re2-rr-32} for {\bf ex32}
show $\displaystyle \frac{\|A\vector{r}_{j}\|_{2}}{\|A\vector{b}\|_{2}}$ versus 
the iteration number for GMRES using pseudoinverse and reorthogonalization (blue),
GMRES using reorthogonalization (red) and 
RRGMRES (green) for an inconsistent problem.

Fig. \ref{fig:Ar-sigma-h-pinv-re2-ms} for {\bf msc01050}
and
Fig. \ref{fig:Ar-sigma-h-pinv-re2-32} for {\bf ex32}  
show $\displaystyle \frac{\|A\vector{r}_{j}\|_{2}}{\|A\vector{b}\|_{2}}$ 
(blue), $\frac{\sigma_{k}(H_{k+1,k})}{\sigma_{1}(H_{k+1,k})}$ (red), and $\frac{h_{k+1,k}}{\|H_{k,k}\|_{F}}$ (green) 
versus 
the iteration number for GMRES using pseudoinverse and reorthogonalization 
for an inconsistent problem.

Fig. \ref{fig:Ar-pinv-re2-gm-re2-rr-ms}
and
Fig. \ref{fig:Ar-pinv-re2-gm-re2-rr-32} show that 
the reorthogonalization eliminates the oscillation of GMRES using pseudoinverse.


Fig. \ref{fig:sig-min1234-gm-ms} and Fig. \ref{fig:sig-min1234-gm-re2-ms} for {\bf msc01050},
Fig. \ref{fig:sig-min1234-gm-32} and Fig. \ref{fig:sig-min1234-gm-re2-32} for {\bf ex32} 
show $ \frac{ \sigma_{k} ( H_{k+1,k} ) }{ \sigma_1 (H_{k+1,k} ) } $, 
$ \frac{ \sigma_{k-1} ( H_{k+1,k} ) }{ \sigma_1 ( H_{k+1,k} ) } $,
$\frac{\sigma_{k-2}(H_{k+1,k})}{\sigma_1(H_{k+1,k})}$, 
$\frac{\sigma_{k-3}(H_{k+1,k})}{\sigma_1(H_{k+1,k})}$
of 
GMRES using pseudoinverse, 
and GMRES using pseudoinverse and reorthogonalization.


\begin{figure}[htbp]
\begin{minipage}{0.5\hsize}
\begin{center}
\includegraphics[keepaspectratio,scale=0.4]{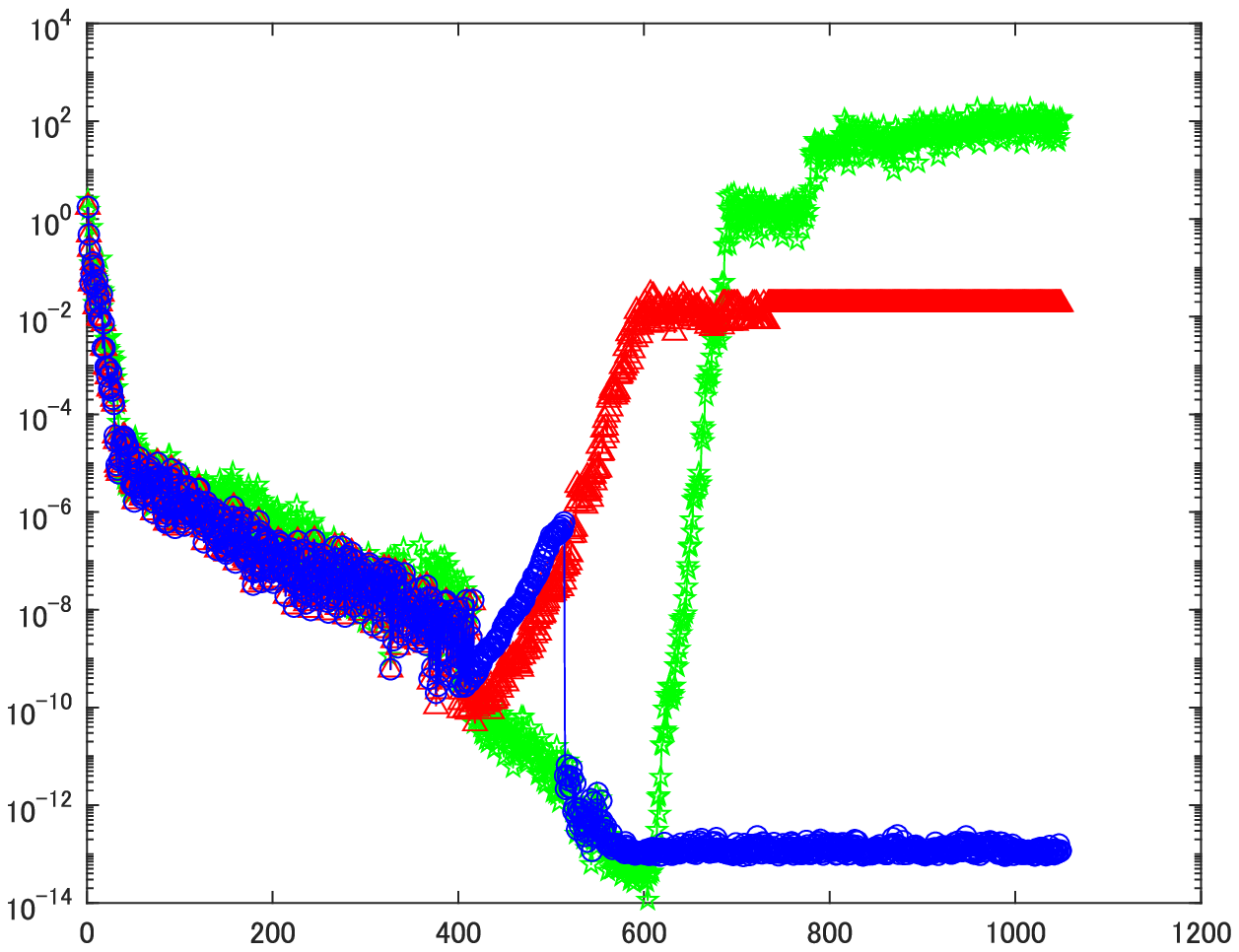}
\end{center}
\captionsetup{width=.95\linewidth}
\caption{$\displaystyle \frac{\|A\vector{r}_{j}\|_{2}}{\|A\vector{b}\|_{2}}$ vs. number of iterations for 
GMRES using pseudoinverse and reorthogonalization (blue), 
GMRES using reorthogonalization (red), 
and RRGMRES (green) for an inconsistent problem ({\bf msc01050})}
\label{fig:Ar-pinv-re2-gm-re2-rr-ms}
\end{minipage}
\begin{minipage}{0.5\hsize}
\begin{center}
\includegraphics[keepaspectratio,scale=0.4]{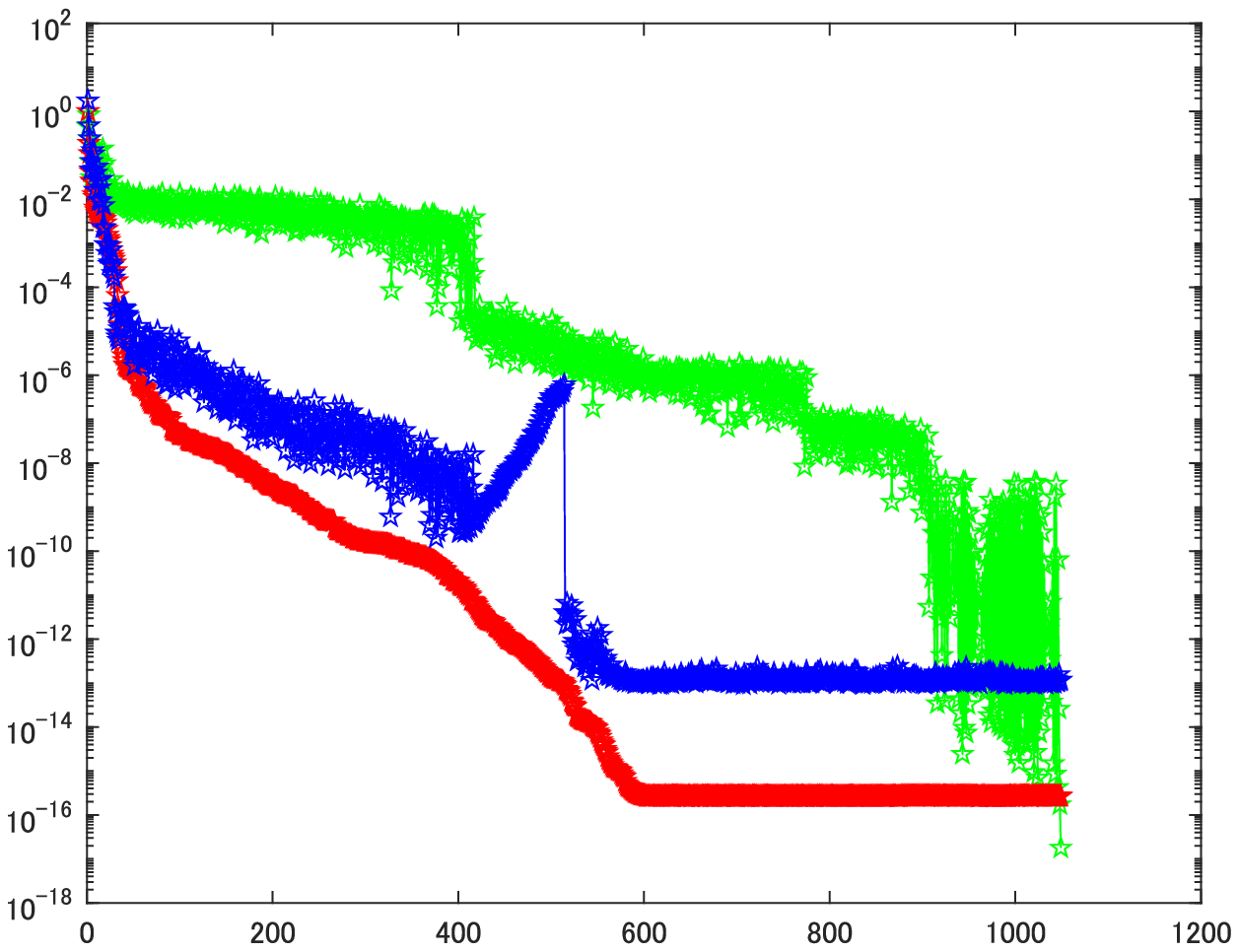}
\end{center}
\captionsetup{width=.95\linewidth}
\caption{$\displaystyle \frac{\|A\vector{r}_{j}\|_{2}}{\|A\vector{b}\|_{2}}$ (blue), 
$\frac{\sigma_{k}(H_{k+1,k})}{\sigma_{1}(H_{k+1,k})}$ (red) and 
$\frac{h_{j+1,j}}{\|H_{j,j}\|_{F}}$ (green) vs. number of iterations for 
GMRES using pseudoinverse and reorthogonalization for an inconsistent problem ({\bf msc01050})}
\label{fig:Ar-sigma-h-pinv-re2-ms}
\end{minipage}
\end{figure}

\begin{figure}[htbp]
\begin{minipage}{0.5\hsize}
\begin{center}
\includegraphics[keepaspectratio,scale=0.4]{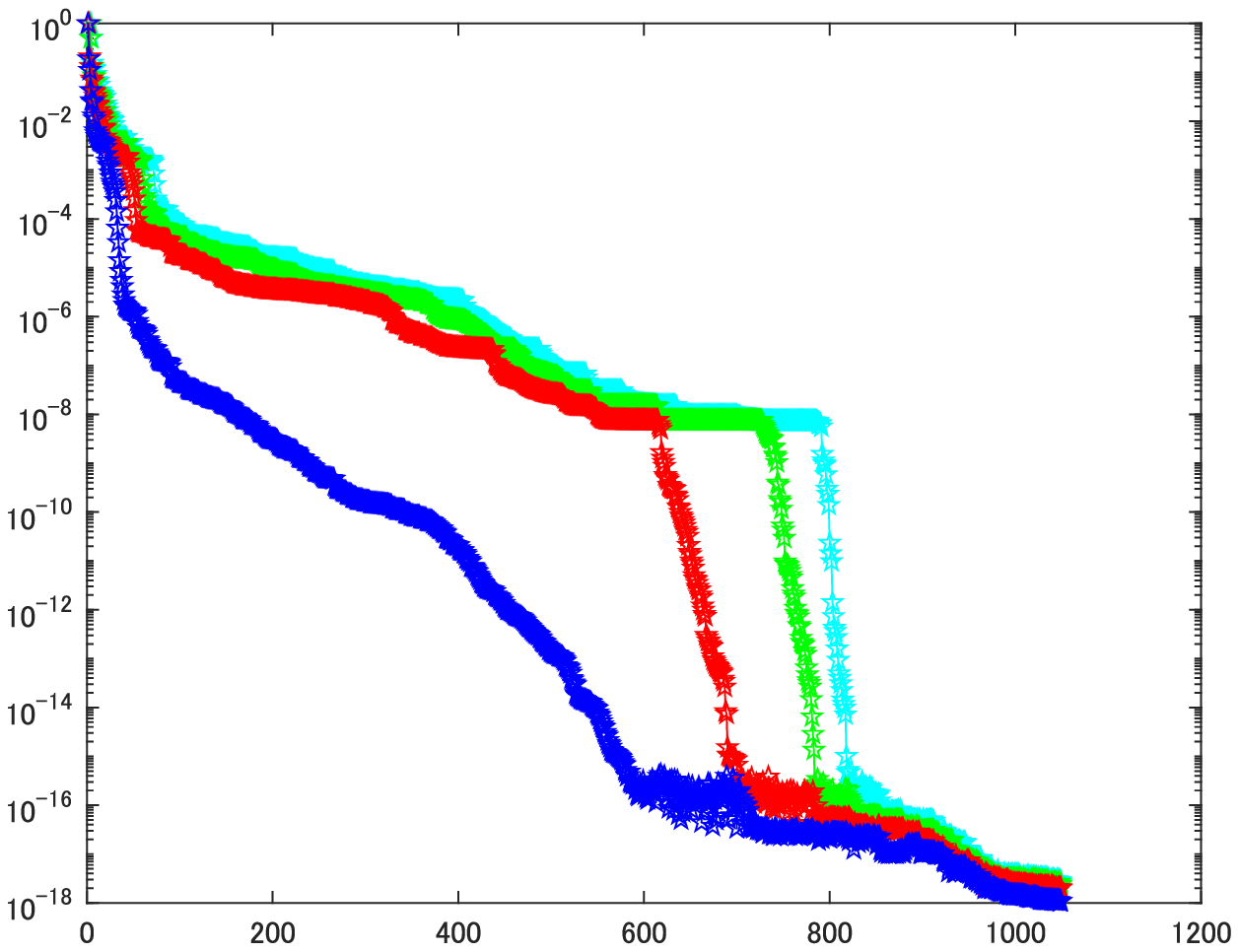}
\end{center}
\captionsetup{width=.95\linewidth}
%
\caption{$\frac{\sigma_{k}(H_{k+1,k})}{\sigma_{1}(H_{k+1,k})}$(blue), 
$\frac{\sigma_{k-1}(H_{k+1,k})}{\sigma_{1}(H_{k+1,k})}$(red),
$\frac{\sigma_{k-2}(H_{k+1,k})}{\sigma_{1}(H_{k+1,k})}$(green) and 
$\frac{\sigma_{k-3}(H_{k+1,k})}{\sigma_{1}(H_{k+1,k})}$(cian) \\
vs. number of iterations for GMRES using pseudoinverse 
for an inconsistent problem ({\bf msc01050})}
\label{fig:sig-min1234-gm-ms}
\end{minipage}
\begin{minipage}{0.5\hsize}
\begin{center}
\includegraphics[keepaspectratio,scale=0.4]{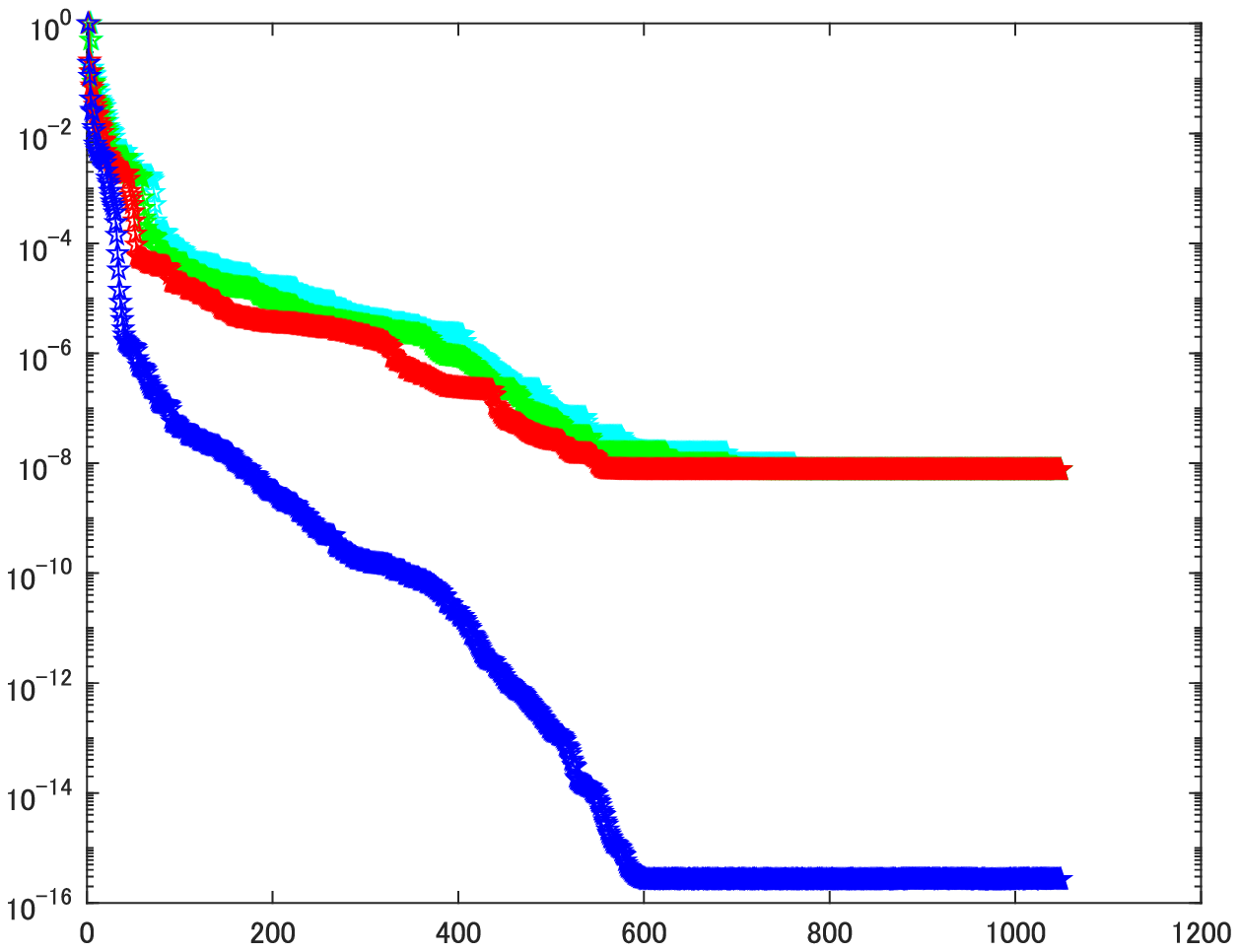}
\end{center}
\captionsetup{width=.95\linewidth}
%
\caption{$\frac{\sigma_{k}(H_{k+1,k})}{\sigma_{1}(H_{k+1,k})}$(blue), 
$\frac{\sigma_{k-1}(H_{k+1,k})}{\sigma_{1}(H_{k+1,k})}$(red),
$\frac{\sigma_{k-2}(H_{k+1,k})}{\sigma_{1}(H_{k+1,k})}$(green) and 
$\frac{\sigma_{k-3}(H_{k+1,k})}{\sigma_{1}(H_{k+1,k})}$(cian) \\ 
vs. number of iterations for GMRES using pseudoinverse and reorthogonalization
for an inconsistent problem ({\bf msc01050})}
\label{fig:sig-min1234-gm-re2-ms}
\end{minipage}
\end{figure}

\begin{figure}[htbp]
\begin{minipage}{0.5\hsize}
\begin{center}
\includegraphics[keepaspectratio,scale=0.4]{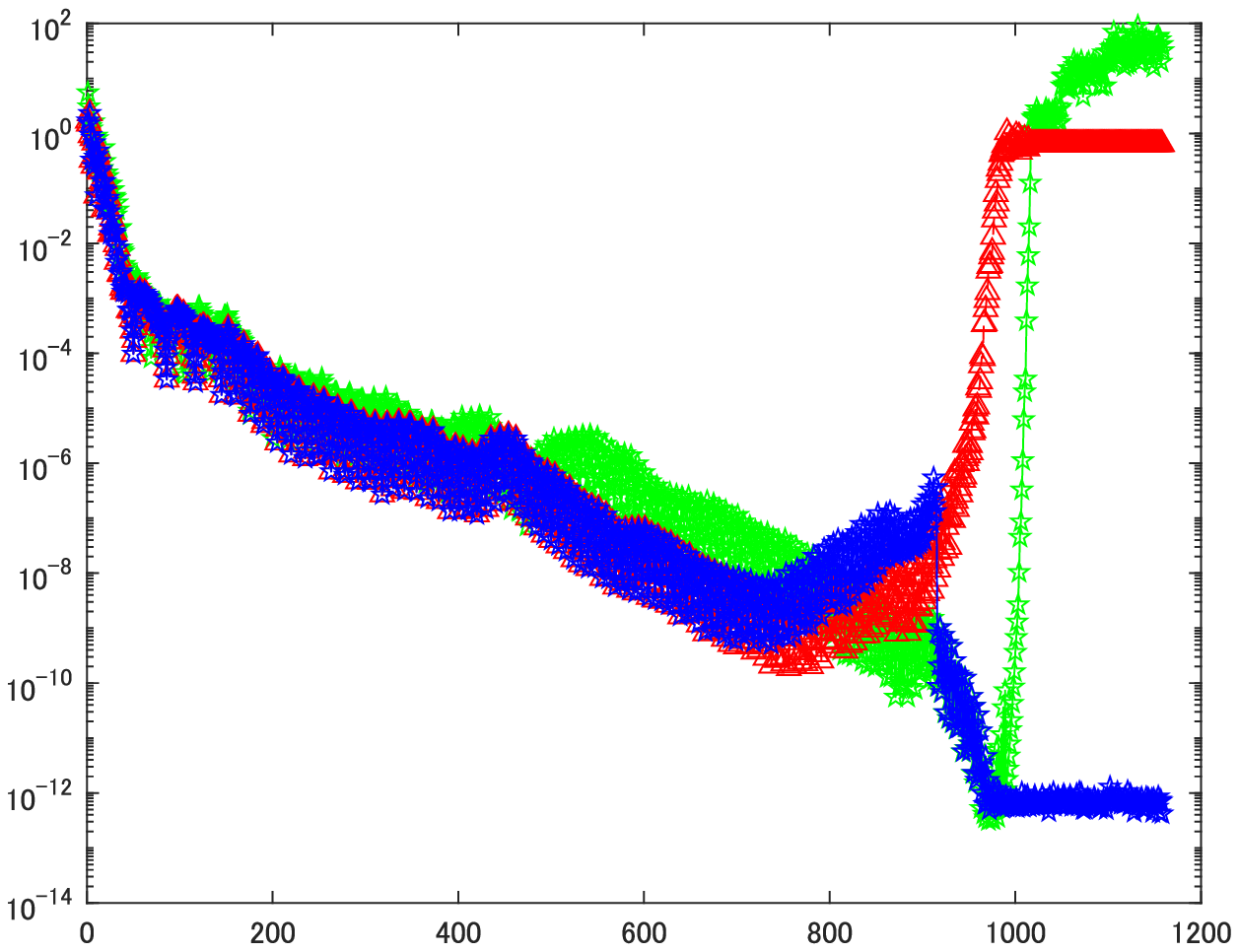}
\end{center}
\captionsetup{width=.95\linewidth}
\caption{$\displaystyle \frac{\|A\vector{r}_{j}\|_{2}}{\|A\vector{b}\|_{2}}$ vs. number of iterations for 
GMRES using pseudoinverse and reorthogonalization (blue), 
GMRES using reorthogonalization (red), 
and RRGMRES (green) for an inconsistent problem ({\bf ex32})}
\label{fig:Ar-pinv-re2-gm-re2-rr-32}
\end{minipage}
\begin{minipage}{0.5\hsize}
\begin{center}
\includegraphics[keepaspectratio,scale=0.4]{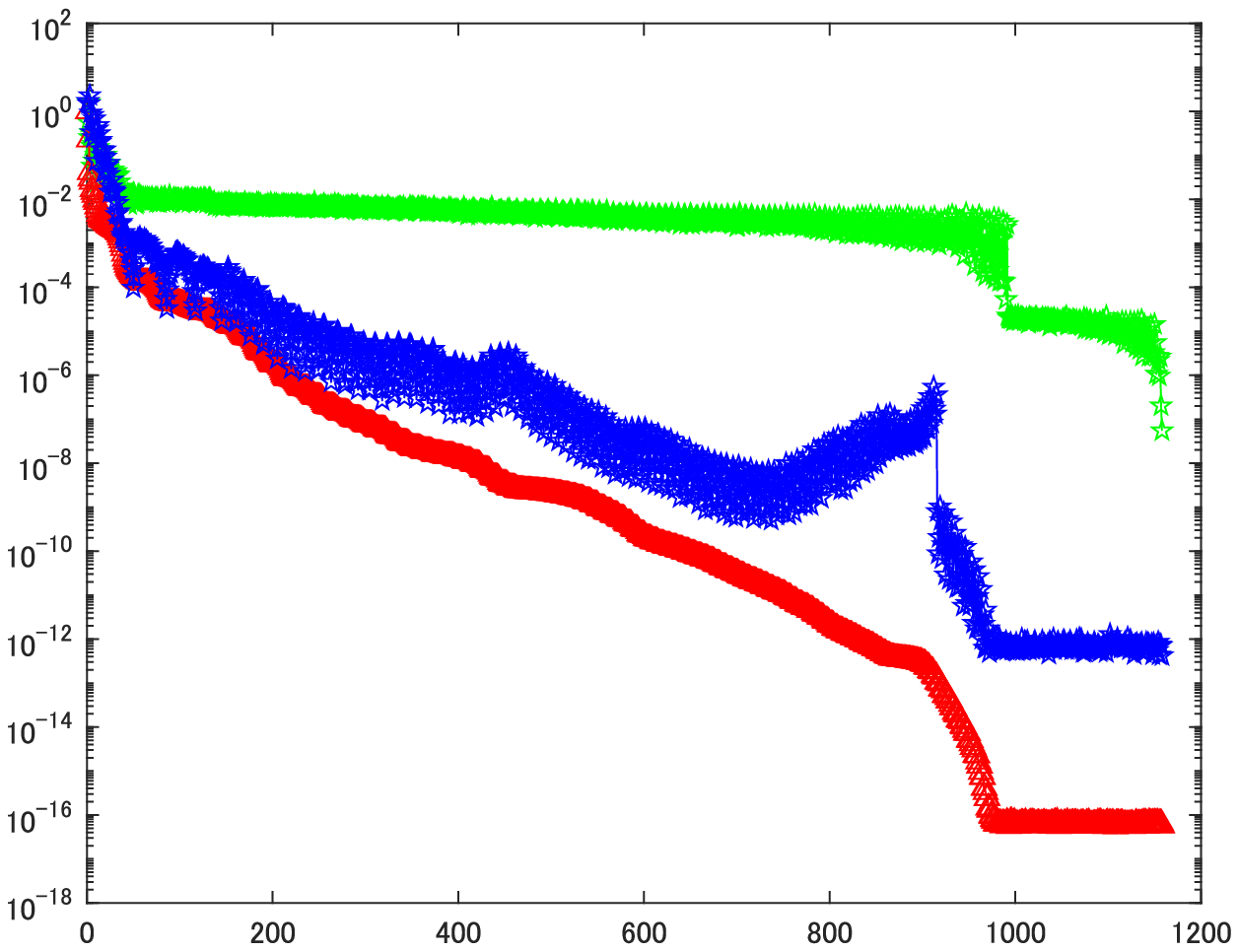}
\end{center}
\captionsetup{width=.95\linewidth}
\caption{$\displaystyle \frac{\|A\vector{r}_{j}\|_{2}}{\|A\vector{b}\|_{2}}$ (blue), 
$\frac{\sigma_{k}(H_{k+1,k})}{\sigma_{1}(H_{k+1,k})}$ (red) and 
$\frac{h_{j+1,j}}{\|H_{j,j}\|_{F}}$ (green) vs. number of iterations for 
GMRES using pseudoinverse and reorthogonalization for an inconsistent problem ({\bf ex32})}
\label{fig:Ar-sigma-h-pinv-re2-32}
\end{minipage}
\end{figure}

\begin{figure}[htbp]
\begin{minipage}{0.5\hsize}
\begin{center}
\includegraphics[keepaspectratio,scale=0.4]{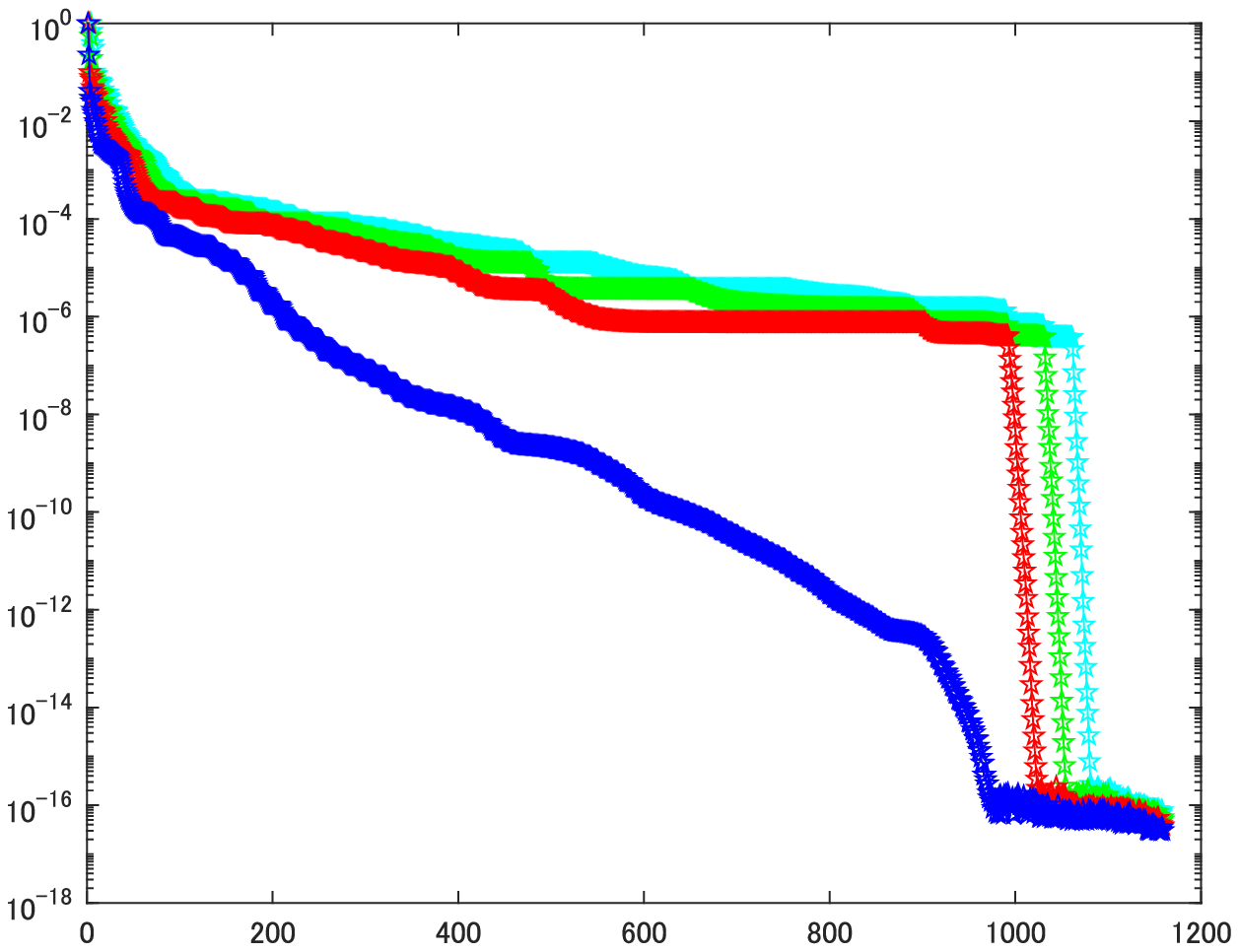}
\end{center}
\captionsetup{width=.95\linewidth}
%
\caption{$\frac{\sigma_{k}(H_{k+1,k})}{\sigma_{1}(H_{k+1,k})}$(blue), 
$\frac{\sigma_{k-1}(H_{k+1,k})}{\sigma_{1}(H_{k+1,k})}$(red),
$\frac{\sigma_{k-2}(H_{k+1,k})}{\sigma_{1}(H_{k+1,k})}$(green) and 
$\frac{\sigma_{k-3}(H_{k+1,k})}{\sigma_{1}(H_{k+1,k})}$(cian) \\
vs. number of iterations for GMRES using pseudoinverse 
for an inconsistent problem ({\bf ex32})}
\label{fig:sig-min1234-gm-32}
\end{minipage}
\begin{minipage}{0.5\hsize}
\begin{center}
\includegraphics[keepaspectratio,scale=0.4]{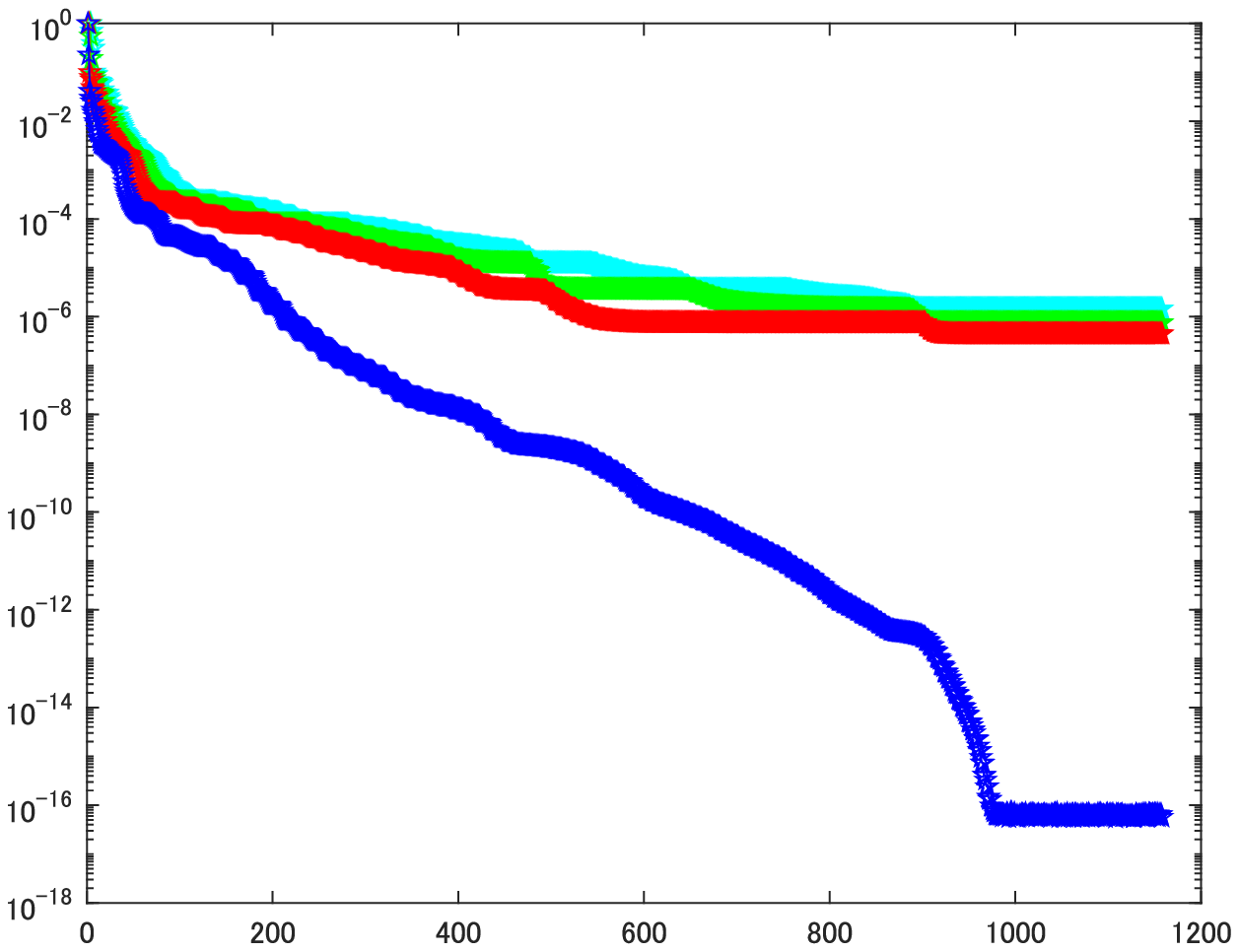}
\end{center}
\captionsetup{width=.95\linewidth}
%
\caption{$\frac{\sigma_{k}(H_{k+1,k})}{\sigma_{1}(H_{k+1,k})}$(blue), 
$\frac{\sigma_{k-1}(H_{k+1,k})}{\sigma_{1}(H_{k+1,k})}$(red),
$\frac{\sigma_{k-2}(H_{k+1,k})}{\sigma_{1}(H_{k+1,k})}$(green) and 
$\frac{\sigma_{k-3}(H_{k+1,k})}{\sigma_{1}(H_{k+1,k})}$(cian) \\
vs. number of iterations for GMRES using pseudoinverse and reorthogonalization
for an inconsistent problem ({\bf ex32})}
\label{fig:sig-min1234-gm-re2-32}
\end{minipage}
\end{figure}


Fig. \ref{fig:sig-min1234-gm-ms}
and
Fig. \ref{fig:sig-min1234-gm-32}
show that 
$\frac{\sigma_{k}(H_{k+1,k})}{\sigma_{1}(H_{k+1,k})}$,
$\frac{\sigma_{k-1}(H_{k+1,k})}{\sigma_{1}(H_{k+1,k})}$,
$\frac{\sigma_{k-2}(H_{k+1,k})}{\sigma_{1}(H_{k+1,k})}$ and 
$\frac{\sigma_{k-3}(H_{k+1,k})}{\sigma_{1}(H_{k+1,k})}$ of 
GMRES cluster as the GMRES iterations proceed. 
For example, 
$\sigma_{k-1}(H_{k+1,k})$ 
is initially larger than $tol$ 
and is not truncated, but gradually, it decreases, and when it becomes
smaller than $tol$, it is truncated by using pseudoinverse.
Similarly for
$\sigma_{k-2}(H_{k+1,k})$ and 
$\sigma_{k-3}(H_{k+1,k})$.  
This is why $\displaystyle \frac{\|A\vector{r}\|_{2}}{\|A\vector{b}\|_{2}}$ of GMRES using pseudoinverse without reorthogonalization 
oscillates in Fig. \ref{fig:Ar-pinv-gm-rr-ms}
and Fig. \ref{fig:Ar-pinv-gm-rr-32}.

On the other hand, Fig. \ref{fig:sig-min1234-gm-re2-ms} and Fig. \ref{fig:sig-min1234-gm-re2-32} show 
that 
$\frac{\sigma_{k-1}(H_{k+1,k})}{\sigma_{1}(H_{k+1,k})}$,
$\frac{\sigma_{k-2}(H_{k+1,k})}{\sigma_{1}(H_{k+1,k})}$ and 
$\frac{\sigma_{k-3}(H_{k+1,k})}{\sigma_{1}(H_{k+1,k})}$ of
GMRES 
using pseudoinverse and reorthogonalization
are larger than $10^{-9}$ 
even when the iterations proceed,
whereas 
$\frac{\sigma_{k}(H_{k+1,k})}{\sigma_{1}(H_{k+1,k})}$ 
becomes smaller than $10^{-15}$.

Since 
$\frac{\sigma_{k}(H_{k+1,k})}{\sigma_{1}(H_{k+1,k})}$,
$\frac{\sigma_{k-1}(H_{k+1,k})}{\sigma_{1}(H_{k+1,k})}$,
$\frac{\sigma_{k-2}(H_{k+1,k})}{\sigma_{1}(H_{k+1,k})}$ and 
$\frac{\sigma_{k-3}(H_{k+1,k})}{\sigma_{1}(H_{k+1,k})}$
of {\bf msc01050}
and
{\bf ex32} do not cluster
and the column vectors of $V_{k}$ are kept numerically linearly independent by reorthogonalization,
all of
$\frac{\sigma_{k}(H_{k+1,k})}{\sigma_{1}(H_{k+1,k})}$,
$\frac{\sigma_{k-1}(H_{k+1,k})}{\sigma_{1}(H_{k+1,k})}$,
$\frac{\sigma_{k-2}(H_{k+1,k})}{\sigma_{1}(H_{k+1,k})}$ and 
$\frac{\sigma_{k-3}(H_{k+1,k})}{\sigma_{1}(H_{k+1,k})}$ 
do not cluster. Thus, $\displaystyle \frac{\|A\vector{r}\|_{2}}{\|A\vector{b}\|_{2}}$
of GMRES using pseudoinverse and reorthogonalization 
does not oscillate.

\subsection{GMRES USING PSEUDOINVERSE AND REORTHOGONALIZATION FOR RANGE SYMMETRIC SYSTEMS}\label{sec:preinviryu}
We will experiment with the nonsymmetric but range symmetric system in section \ref{sec:pinviryu}.

Fig. \ref{fig:Ar-pinv-re2-gm-re2-rr-iryu} shows $\displaystyle \frac{\|A^{\rm T}\vector{r}_{j}\|_{2}}{\|A^{\rm T}\vector{b}\|_{2}}$ versus 
the iteration number for GMRES using pseudoinverse and reorthogonalization (blue),
GMRES using reorthogonalization (red) and 
RRGMRES (green) for an inconsistent problem.

Fig. \ref{fig:Ar-sigma-h-pinv-re2-iryu} shows $\displaystyle \frac{\|A^{\rm T}\vector{r}_{j}\|_{2}}{\|A^{\rm T}\vector{b}\|_{2}}$ 
(blue), $\frac{\sigma_{k}(H_{k+1,k})}{\sigma_{1}(H_{k+1,k})}$ (red), and $\frac{h_{k+1,k}}{\|H_{k,k}\|_{F}}$ (green) 
versus 
the iteration number for GMRES using pseudoinverse and reorthogonalization 
for an inconsistent problem.

Fig. \ref{fig:sig-min1234-gm-iryu} and Fig. \ref{fig:sig-min1234-gm-re2-iryu}
show 
$\frac{\sigma_{k}(H_{k+1,k})}{\sigma_{1}(H_{k+1,k})}$,
$\frac{\sigma_{k-1}(H_{k+1,k})}{\sigma_{1}(H_{k+1,k})}$,
$\frac{\sigma_{k-2}(H_{k+1,k})}{\sigma_{1}(H_{k+1,k})}$ and 
$\frac{\sigma_{k-3}(H_{k+1,k})}{\sigma_{1}(H_{k+1,k})}$ of
GMRES using pseudoinverse,
and GMRES using pseudoinverse and reorthogonalization.

\begin{figure}[htbp]
\begin{minipage}{0.5\hsize}
\begin{center}
\includegraphics[keepaspectratio,scale=0.4]{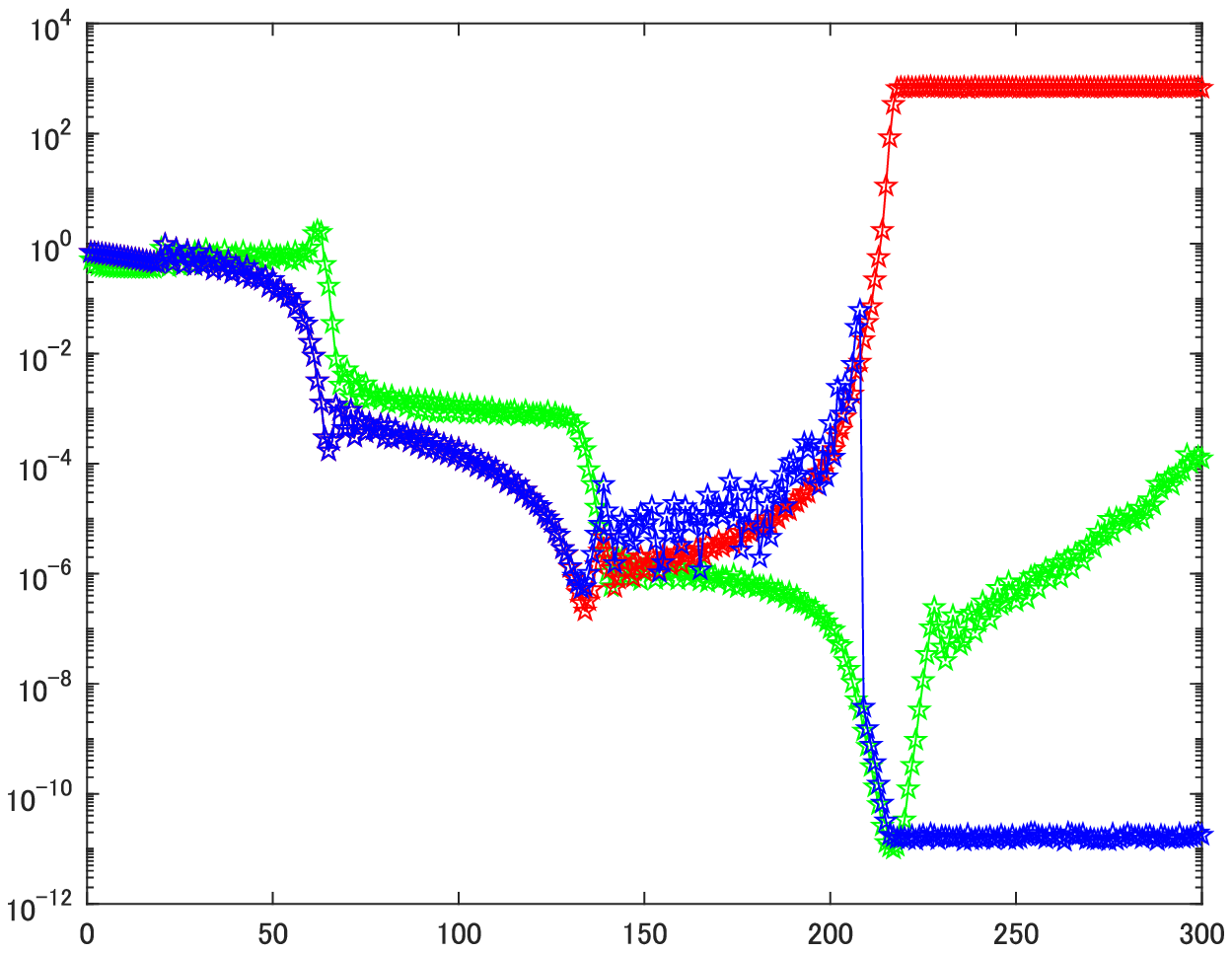}
\end{center}
\captionsetup{width=.95\linewidth}
\caption{$\displaystyle \frac{\|A^{\rm T}\vector{r}_{j}\|_{2}}{\|A^{\rm T}\vector{b}\|_{2}}$ vs. number of iterations for 
GMRES using pseudoinverse and reorthogonalization (blue), 
GMRES using reorthogonalization (red), 
and RRGMRES (green) for a range symmetric inconsistent problem}
\label{fig:Ar-pinv-re2-gm-re2-rr-iryu}
\end{minipage}
\begin{minipage}{0.5\hsize}
\begin{center}
\includegraphics[keepaspectratio,scale=0.4]{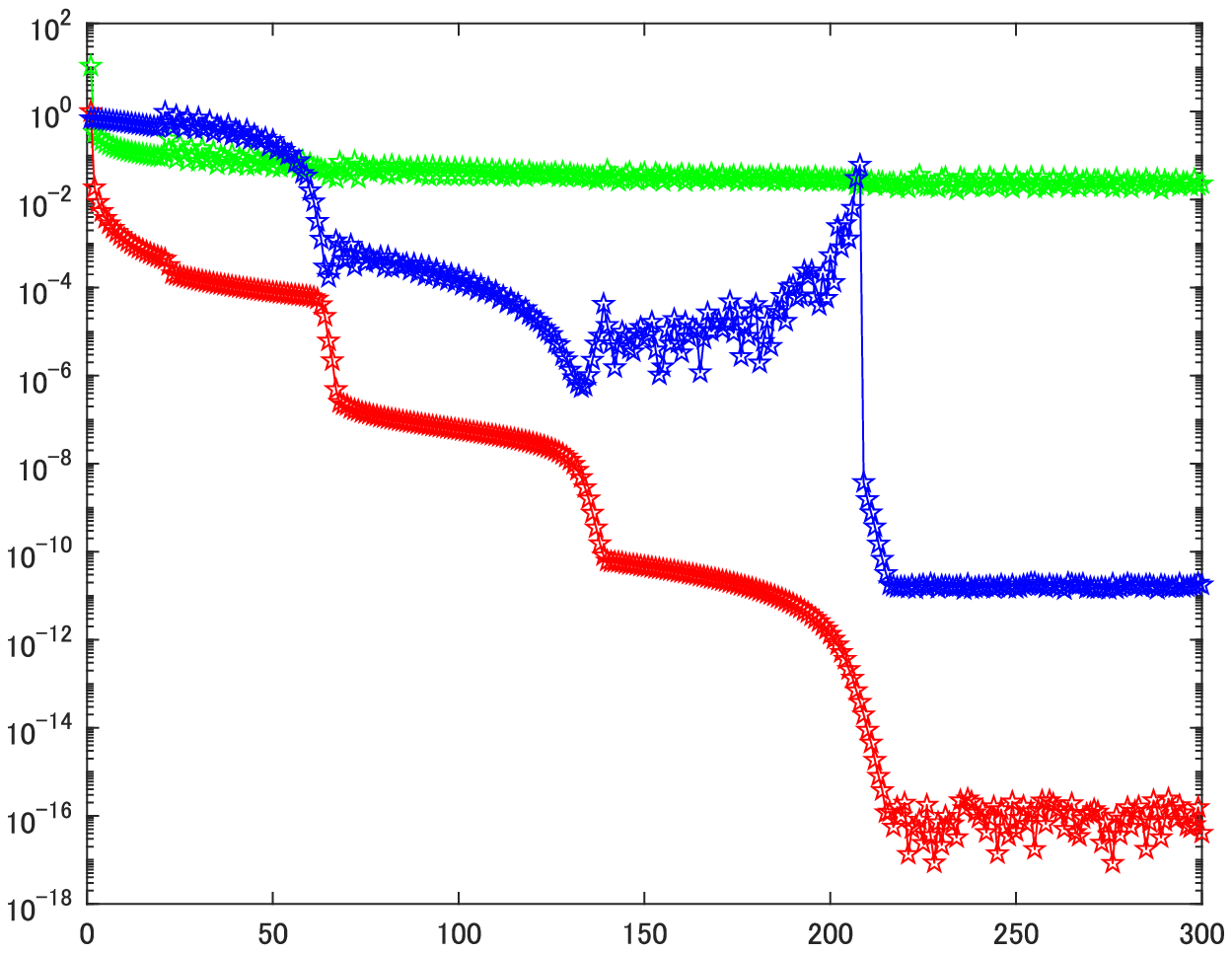}
\end{center}
\captionsetup{width=.95\linewidth}
\caption{$\displaystyle \frac{\|A^{\rm T}\vector{r}_{j}\|_{2}}{\|A^{\rm T}\vector{b}\|_{2}}$ (blue), 
$\frac{\sigma_{k}(H_{k+1,k})}{\sigma_{1}(H_{k+1,k})}$ (red) and 
$\frac{h_{j+1,j}}{\|H_{j,j}\|_{F}}$ (green) vs. number of iterations for 
GMRES using pseudoinverse and reorthogonalization for a range symmetric inconsistent problem}
\label{fig:Ar-sigma-h-pinv-re2-iryu}
\end{minipage}
\end{figure}

\begin{figure}[htbp]
\begin{minipage}{0.5\hsize}
\begin{center}
\includegraphics[keepaspectratio,scale=0.4]{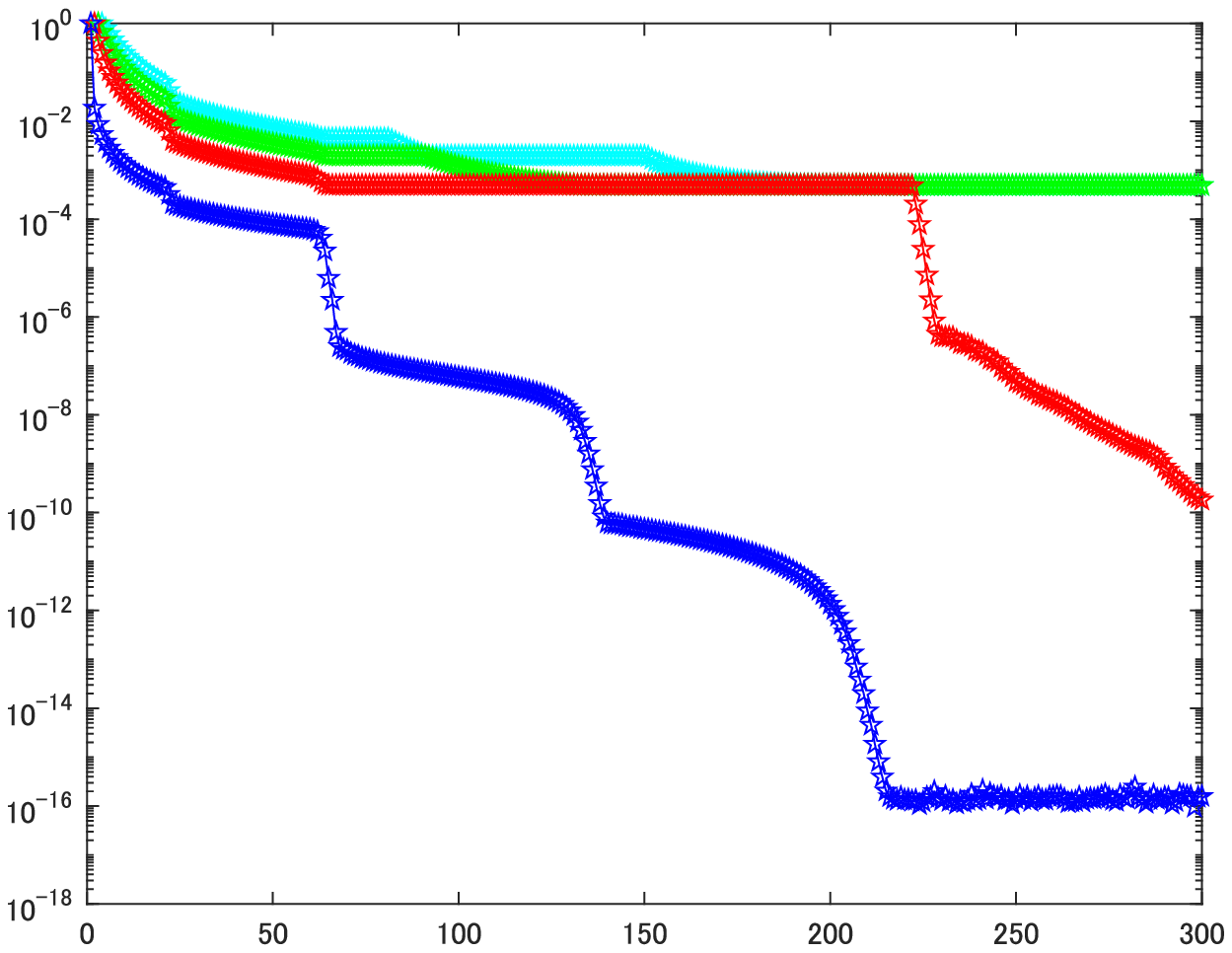}
\end{center}
\captionsetup{width=.95\linewidth}
\caption{
$\frac{\sigma_{k}(H_{k+1,k})}{\sigma_{1}(H_{k+1,k})}$(blue), 
$\frac{\sigma_{k-1}(H_{k+1,k})}{\sigma_{1}(H_{k+1,k})}$(red),
$\frac{\sigma_{k-2}(H_{k+1,k})}{\sigma_{1}(H_{k+1,k})}$(green) and 
$\frac{\sigma_{k-3}(H_{k+1,k})}{\sigma_{1}(H_{k+1,k})}$(cian) \\
vs. number of iterations for GMRES using \\ pseudoinverse 
for an inconsistent problem}
\label{fig:sig-min1234-gm-iryu}
\end{minipage}
\begin{minipage}{0.5\hsize}
\begin{center}
\includegraphics[keepaspectratio,scale=0.4]{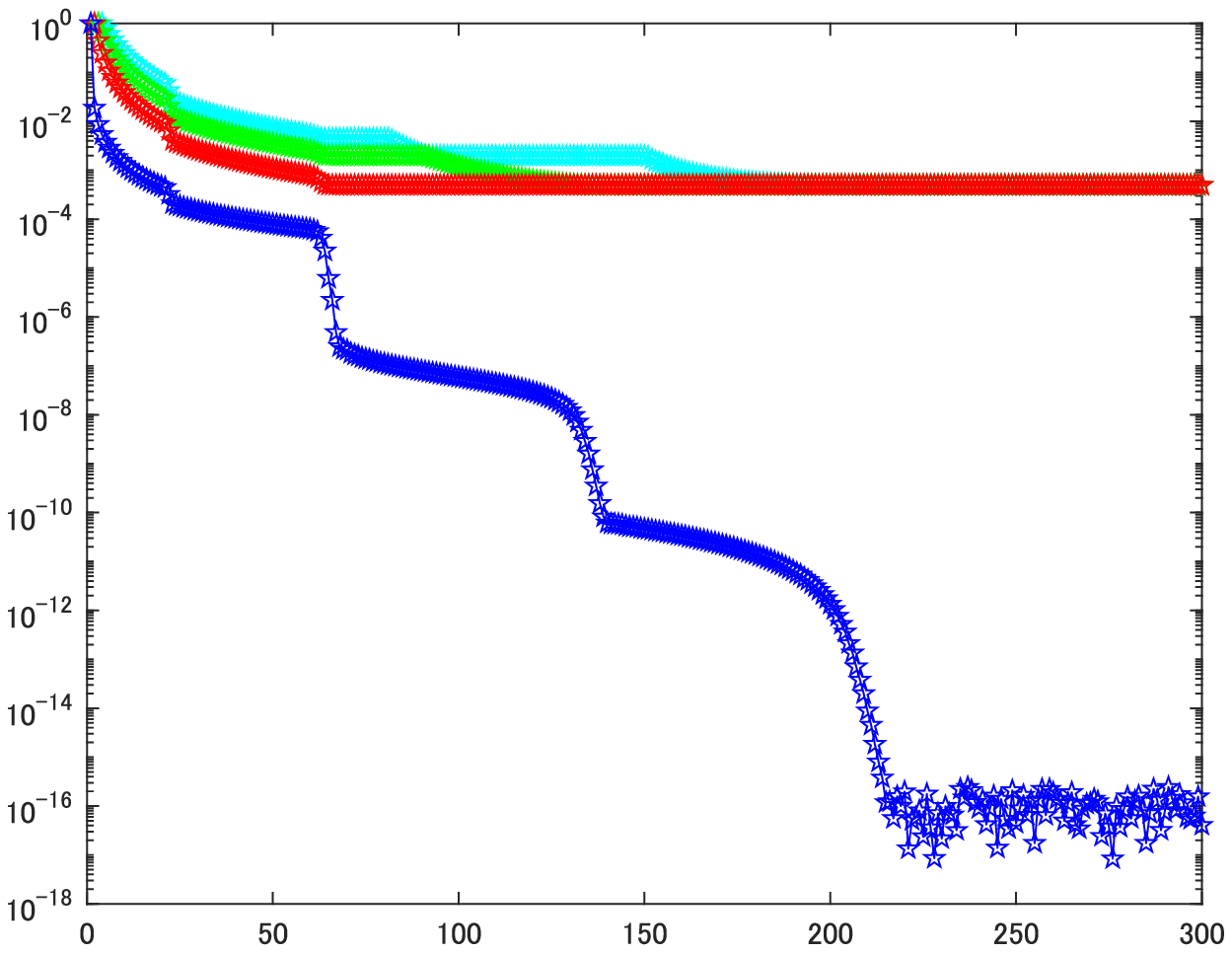}
\end{center}
\captionsetup{width=.95\linewidth}
\caption{
$\frac{\sigma_{k}(H_{k+1,k})}{\sigma_{1}(H_{k+1,k})}$(blue), 
$\frac{\sigma_{k-1}(H_{k+1,k})}{\sigma_{1}(H_{k+1,k})}$(red),
$\frac{\sigma_{k-2}(H_{k+1,k})}{\sigma_{1}(H_{k+1,k})}$(green) and 
$\frac{\sigma_{k-3}(H_{k+1,k})}{\sigma_{1}(H_{k+1,k})}$(cian) \\
vs. number of iterations for GMRES using \\ pseudoinverse and reorthogonalization
for an \\ inconsistent problem}
\label{fig:sig-min1234-gm-re2-iryu}
\end{minipage}
\end{figure}

Fig. \ref{fig:sig-min1234-gm-iryu} shows that 
$\frac{\sigma_{k-1}(H_{k+1,k})}{\sigma_{1}(H_{k+1,k})}$ 
of GMRES
using pseudoinverse
becomes small. However, it is not truncated by using the pseudoinverse. Thus, 
Fig. \ref{fig:Ar-pinv-gm-rr-iryu}
shows that
$\displaystyle \frac{\|A^{\rm T}\vector{r}_{j}\|_{2}}{\|A^{\rm T}\vector{b}\|_{2}}$
of this method increases after $\displaystyle \frac{\|A^{\rm T}\vector{r}_{j}\|_{2}}{\|A^{\rm T}\vector{b}\|_{2}}$ becomes smallest.
On the other hand, Fig. \ref{fig:sig-min1234-gm-re2-iryu} shows that 
$\frac{\sigma_{k-1}(H_{k+1,k})}{\sigma_{1}(H_{k+1,k})}$ of
GMRES
using
pseudoinverse and  
reorthogonalization 
is larger than $10^{-4}$ and stagnates. Thus, Fig. \ref{fig:Ar-pinv-re2-gm-re2-rr-iryu}
shows that
$\displaystyle \frac{\|A^{\rm T}\vector{r}_{j}\|_{2}}{\|A^{\rm T}\vector{b}\|_{2}}$
of GMRES using pseudoinverse and reorthogonalization does not increase
after $\displaystyle \frac{\|A^{\rm T}\vector{r}_{j}\|_{2}}{\|A^{\rm T}\vector{b}\|_{2}}$ becomes smallest.

\section{Concluding remarks}\label{sec:ConcL}
We derived the necessary and sufficient conditions for GMRES to determine 
a least squares solution of inconsistent and consistent range symmetric systems assuming exact arithmetic 
except for the computation of the elements of the Hessenberg matrix.
Then, we proposed using pseudoinverse to solve the Hessenberg systems in GMRES in order to improve
the numerical convergence for inconsistent systems.
Some numerical experiments on symmetric semidefinite inconsistent systems and a nonsymmetric
but range symmetric inconsistent system indicate that the method is 
effective and robust.
Moreover, we proposed GMRES using pseudoinverse and reorthogonalization
to further stabilize the convergence by suppressing the oscillation of the residual. 

\section{Acknowledgement}
We would like to thank Dr. Keiichi Morikuni for valuable discussions, and Professor Lothar Reichel for 
valuable remarks.



\bibliography{mybibfile}

\begin{thebibliography}{99}
\bibitem{Brown}
Brown P, Walker HF. GMRES on (nearly) singular systems, 
SIAM J. Matrix Anal. Appl. 1997;~{\bf 18}:37--51.
\bibitem{edgMag}
Igarashi H, Honma T. On convergence of ICCG applied to finite element equation for quasi-static fields,
IEEE Trans. Magn. 2002;~{\bf 38}:565--568.
\bibitem{StaMag}
Igarashi H. On the property of the curl-curl matrix in finite element 
analysis with edge elements,
IEEE Trans. Magn. 2001;~{\bf 37}~(5):3129--3132.
\bibitem{Saad2nd}
Saad Y, Schultz MH. GMRES: A generalized minimal residual algorithm for solving nonsymmetric linear systems, 
SIAM J. Sci. Stat. Comput 1986;~{\bf 7}:856--869.
\bibitem{Saad}
Saad Y. Iterative Methods for Sparse Linear Systems, second ed.,
SIAM, Philadelphia, PA; 2003.
\bibitem{hayamiM}
Hayami K, Sugihara M. A geometric view of Krylov subspace methods on 
singular systems, 
Numer. Linear Algebra Appl. 2011;~{\bf 18}:449--469.
\bibitem{MoriEP}
Morikuni K, Rozlo$\rm{\check{z}}$n$\rm{\acute{\imath}}$k M. On GMRES for EP and \\ 
GP singular systems,
SIAM J. Matrix Anal. Appl. May 2017;~{\bf 39(2)}.
https://doi.org/10.1137/17M1128216
\bibitem{Hig}
Higham NJ. Accuracy and Stability of Numerical Algorithms, second ed., SIAM, Philadelphia, PA; 2002.
\bibitem{Bjorck}
Bj${\rm{\ddot{o}}}$rck $\rm{\dot{A}}$. Numerical Methods for Least Squares Problems,
SIAM, Philadelphia, PA; 1996.
\bibitem{RMINRES}
Sugihara K, Hayami K, Zheng N. Right preconditioned MINRES for singular systems,
Numer. Linear Algebra Appl. January 2020. https://doi.org/10.1002/nla.2277
\bibitem{HS}
Hayami K, Sugihara K. Corrigendum 2 to: A geometric view of Krylov subspace methods on singular systems,
Numer. Linear Algebra Appl. 2021. https://doi.org/10.1002/nla.2368
\bibitem{CLR1}
Calvetti D, Lewis B, and Reichel L. 
GMRES-type methods for inconsistent systems, 
Linear Algebra Appl. 2000;~{\bf 316}:157--169.
\bibitem{Ry}
Reichel L, Ye Q. Breakdown-free GMRES for singular systems, SIAM J. Matrix Anal. Appl. 2005; 26:1001--1021.
\bibitem{RRMINRES}
Calvetti D, Lewis B, and Reichel L. 
On the choice of subspace for iterative methods for linear discrete ill-posed problems, 
Int. J. Appl. Math. Comput. Sci. 2001;~{\bf 11}, no.5:1069--1092.
\bibitem{ReiNeu}
Neuman A, Reichel L, and Sadok H. Implementations of range restricted iterative
methods for linear discrete ill-posed problems,
Linear Algebra and its Appl. 2012;~{\bf 436(10)}:3974--3990.
\bibitem{Florida}
Davis TA. SuiteSparse Matrix Collection,~https://sparse.tamu.edu/
\bibitem{Minres}
Paige C. C, Saunders, M. A. Solution of sparse indefinite systems of 
linear equations, SIAM J. Numer. Anal. 1975;~{\bf 12(4)}:617--629.
\bibitem{DMR}
Dykes L, Marcell$\acute{a}$n F, and Reichel L. The structure of iterative methods for symmetric linear
discrete ill-posed problems, BIT Numer. Math. 2014;~{\bf 54}:129--145.
\bibitem{Liao}
Liao Z, Hayami K, Morikuni K, and Xiu J.-F. 
A stabilized GMRES method for singular and severely ill-conditioned systems of
linear equations, Jpn. J. Ind. Appl. Math., 2022.
https://doi.org/10.1007/s13160-022-00505-2
\end{thebibliography}

\end{document}